\documentclass[11pt,a4paper]{article}
\usepackage{stmaryrd}
\usepackage{amsmath,cases}
\usepackage{amssymb,amsfonts,bm}
\usepackage{amsthm}
\usepackage{dsfont}
\usepackage{color,graphicx,caption,subcaption}
\usepackage[svgnames]{xcolor}
\usepackage{hyperref}
\usepackage{xspace}
\usepackage{fullpage}
\usepackage{placeins}
\usepackage{enumitem}
\usepackage{epstopdf,caption,graphicx,subcaption,mathtools}
\usepackage{url}
\usepackage{verbatim}
\usepackage{amsthm}
\usepackage{commath}
\usepackage{multirow}
\usepackage[ruled]{algorithm2e}
\usepackage{ulem}
\usepackage{booktabs}
\usepackage{float}
\usepackage{xpatch}
\xpatchcmd{\paragraph}{\normalfont}{{\normalfont\bfseries}}{}{}
\usepackage{tabularx}

\usepackage{epsfig,epstopdf,bm}
\usepackage{graphicx}

\newcommand{\be}{\begin{equation}}
	\newcommand{\ee}{\end{equation}}
\newcommand{\ba}{\begin{array}}
	\newcommand{\ea}{\end{array}}
\def\bea{\begin{eqnarray}}
	\def\eea{\end{eqnarray}}
\def \beas{\begin{eqnarray*}}
	\def \eeas{\end{eqnarray*}}
\newtheorem{definition}{Definition}[section]

\newtheorem{remark}{Remark}[section]

\newtheorem{thm}{Theorem}[section]
\newtheorem{lemma}{Lemma}[section]

\DeclareMathOperator*{\argmin}{arg\,min}

\def\<{\langle}
\def\>{\rangle}

\def\ba{{\bf a}}

\begin{document}
\begin{center}
	\large \bf Optimal local linear convergence of Nesterov’s accelerated gradient method for 
	$C^2$ functions under the Polyak--Łojasiewicz inequality
\end{center}
\vspace{1pt}
\begin{center}
	\large  Zixu Feng$^1$ \quad and\quad Hao Yuan$^2$
\end{center} 
\vspace{1pt}	
{\small	
\begin{center}\it
	\textsuperscript{1}{School of mathematical science, Chengdu University of Technology, Chengdu 610064,  P. R. China}\\
 \textsuperscript{2}{School of Mathematics, Sichuan University, Chengdu 610064, P.R. China}
\end{center}
}
\normalsize
\rm
\begin{center}\it		
	E-mail: zixu\_feng123@163.com, \quad drifteryuan@163.com
\end{center}

\footnotesize
\vspace{6pt}
\textbf{Abstract.} In this work, we establish that Nesterov's accelerated gradient method, applied to $C^2$ functions satisfying the Polyak--\L ojasiewicz inequality around local minimizers, achieves the optimal local linear convergence rate $\rho=\frac{\sqrt{3L+\mu}-2\sqrt{\mu}}{\sqrt{3L+\mu}}+\varepsilon$, where $\varepsilon$ is an arbitrarily small constant. Our analysis requires neither higher-order smoothness beyond $C^2$ of the objective function nor any additional geometric regularity of the submanifold of local minimizers. The key novelty lies in a two-stage argument: we first establish a coarse yet valid local linear convergence rate and then, building upon this a priori convergence guarantee, obtain a refined characterization of the linearized iteration operator, which yields the optimal rate. As a result, we only need to slightly strengthen the standard $C^{1,1}$ assumption, which is commonly required in theoretical analyses of linear convergence for first-order methods, to $C^2$ smoothness. Moreover,  the same analytical framework allows us to recover, under identical conditions, the optimal local exponential convergence rate $\sqrt{\mu}$ for the continuous-time Heavy Ball dynamics. Finally, a representative numerical experiment corroborates our theoretical findings.
\vspace{6pt}

{\bf Keywords:} Nesterov's accelerated gradient method, Polyak--Łojasiewicz inequality, optimal local linear convergence rate

\vspace{6pt}
{\bf MSC codes.} 65K05, 90C26, 90C30 
\normalsize



\section{Introduction}
\setcounter{equation}{0}
We consider the following unconstrained minimization problem
\begin{align}\label{U-min-p}
	\text{Find}\quad x_{*}\in\argmin_{x\in\mathbb{R}^d} f(x),
\end{align}
where $f:\mathbb{R}^d\to\mathbb{R}$ is lower-bounded and twice continuously differentiable. For large-scale optimization, due to the relatively low computational cost per iteration, first-order algorithms are widely used . Among them Nesterov's accelerated gradient (NAG) methods \cite{2018Nestreov} have attracted considerable attention for their potential to accelerate convergence toward local minimizers. Specifically, given two initial points $x^0, x^1 \in \mathbb{R}^d$, the NAG iterates are defined for $n \geq 1$ by  
\begin{align}\label{NAG}
	\left\{
	\begin{aligned}
		&y^{n} = x^n + \beta v_n,\; v_n:=x^n-x^{n-1} \\
		&x^{n+1} = y^n - \alpha \nabla f(y^n)
	\end{aligned}
	\right.\;,
\end{align}
where $\alpha > 0$ is the stepsize and $\beta \in [0,1)$ is the momentum parameter.

In the locally strongly convex setting, i.e., the Hessian of $f$ at a local minimizer $x_{*}$ satisfies the second-order sufficient condition ($\nabla^2 f(x_{*}) \succ 0$), it is well known that NAG can achieve an accelerated local convergence rate compared to standard gradient descent method (GD). Indeed, building on the analysis in \cite{lessard2016}, one can show that for optimally tuned hyperparameters $\alpha$ and $\beta$, and under the additional assumption that the iterates are initialized sufficiently close to $x_{*}$, the method converges linearly with asymptotic rate  
\[
\rho_{\mathrm{NAG}} = \frac{\sqrt{3L + \mu} - 2\sqrt{\mu}}{\sqrt{3L + \mu}} + \varepsilon,
\]
where $L$ and $\mu$ denote the largest and smallest eigenvalues of $\nabla^2 f(x_{*})$, respectively, and $\varepsilon > 0$ is an arbitrarily small constant.
In contrast, GD with optimal stepsize achieves only the optimal local linear rate  
\[
\rho_{\mathrm{GD}} = \frac{L - \mu}{L + \mu} + \varepsilon.
\]
As shown in the inequality  
\[
\frac{\sqrt{3L + \mu} - 2\sqrt{\mu}}{\sqrt{3L + \mu}} \leq \frac{L - \mu}{L + \mu}, \quad \text{for all } L \geq \mu > 0,
\]
with strict inequality when $L > \mu$, the NAG indeed provides a provably faster local convergence rate in this structured.

In many practical applications, the strong convexity assumption is frequently violated. A wide range of optimization problems of interest, such as those arising in machine learning, signal processing, and inverse problems, often exhibit non-isolated minimizers and may even admit infinitely many minimizers. This has motivated decades of research into the convergence behavior of momentum-based algorithms, particularly the Heavy Ball method and NAG, under assumptions weaker than strong convexity. To this end, several classical relaxed conditions have been proposed. Among the most prominent is the Polyak--\L{}ojasiewicz (PL) inequality, independently introduced by Polyak~\cite{1963Polyak} and \L{}ojasiewicz~\cite{1963LS}. Owing to its simplicity, broad applicability, and validity even for certain non-convex functions, the PL inequality has become a widely used tool for establishing linear convergence of gradient-based algorithms without requiring strong convexity. Beyond the PL condition, other relaxations of strong convexity have been studied, including the error bound property~\cite{1993Luo}, quadratic growth~\cite{2000Anitescu}, the restricted secant inequality~\cite{2013Zhang}, and essential strong convexity~\cite{2014Liu}. In~\cite{2019Necoara}, the authors further discuss a family of closely related conditions, such as quasi-(weak) strong convexity, quadratic functional growth, quadratic gradient growth, and quadratic under-approximation. Under the assumption that $f$ has a Lipschitz-continuous gradient, these conditions often imply one another (see~\cite[Theorem~4]{2019Necoara} and~\cite[Theorem~2]{2016Karimi}). Moreover, if one further assumes that $f \in C^2$, then according to the recent results in~\cite{2025Fast} and~\cite[Lemma 3.2]{2026HBOpt}, all these conditions are equivalent in a sufficiently small neighborhood of minimizers. 

A central question regarding the aforementioned weakened strong convexity conditions is whether momentum-based algorithms, such as the Heavy Ball method and NAG, can achieve acceleration over standard gradient descent. Here, by ``acceleration'' we mean that the linear convergence rate $\rho$ improves from depending on $\frac{\mu}{L}$ to depending on $\sqrt{\frac{\mu}{L}}$.
To the best of our knowledge, under only the assumptions that $f$ satisfies one of the aforementioned conditions and that $\nabla f$ is $L$-Lipschitz continuous or $f\in C^{1,1}$, no general theoretical result guarantees such acceleration for momentum methods. In fact, negative results exist: \cite{2023OnLB} shows that for general PL functions with $L$-Lipschitz gradients, momentum-based algorithms cannot achieve global acceleration. Nevertheless, acceleration can be recovered under additional structural assumptions on $f$. All known positive results assume that at least $\nabla f$ is $L$-Lipschitz continuous or that $f\in C^{1,1}$.  For the Heavy Ball method, \cite{2022Wang} establishes provable acceleration for a class of non-convex PL functions whose non-convexity is ``averaged out'' in a suitable sense. More recently, \cite{2026HBOpt} shows that if $f \in C^4$ satisfies the PL inequality around its minimizer set, then the Heavy Ball method recovers the optimal asymptotic local convergence rate originally derived by Polyak~\cite{1964P} for $C^2$ strongly convex functions, thereby achieving local acceleration. For NAG, \cite{2019Necoara} proves acceleration of NAG for quasi-strongly convex functions, provided that the auxiliary iterates share the same projection onto the set of minimizers. \cite{2024HBM} demonstrates acceleration of NAG for convex functions satisfying the quadratic growth condition. \cite{2025SqC} establishes acceleration of NAG for $\gamma$-strongly quasar-convex functions (which reduce to quasi-strongly convex when $\gamma = 1$) under an additional curvature condition.
\cite{2025Gupta} proves acceleration of NAG for quasi-strongly convex functions under the assumption that the set of minimizers is a $C^2$ embedded submanifold of $\mathbb{R}^d$ and that the projection mapping onto this set satisfies a mild regularity condition. More recently,
\cite{2026HJ} clarifies provable acceleration by momentum for PL functions under an aiming condition. 

\vspace{0.3cm}

\textbf{Our contribution.} In this work, we prove that NAG achieves local acceleration under the assumption that $f$ satisfies the PL inequality in a neighborhood of its minimizer set and that $f \in C^2$. Specially, we recover the optimal asymptotic local convergence rate
\[
\rho = \frac{\sqrt{3L + \mu} - 2\sqrt{\mu}}{\sqrt{3L + \mu}} + \varepsilon,
\]
where $\varepsilon > 0$ is an arbitrarily small constant, which matches the sharp rate established by~\cite{lessard2016}. To the best of our knowledge, this is the first result establishing such a sharp local acceleration rate for NAG under merely $C^2$ smoothness and a local PL condition. Our analysis relies on a novel use of the a priori linear convergence rate, which enables a refined spectral characterization without requiring higher-order smoothness of $f$ or additional geometric regularity of the local minimizer submanifold. Moreover, leveraging this idea, we also establish the optimal local exponential decay rate $\sqrt{\mu}$ for the continuous-time Heavy Ball accelerated flow trajectory associated with NAG.

\vspace{0.3cm}

The remainder of this paper is organized as follows: Section \ref{Sec2} introduces the necessary notations and assumptions, recalls key properties of the objective function under these assumptions, and presents the main results. Section \ref{Sec3} first establishes several requisite lemmas before providing the proofs of the main results. In Section \ref{Sec4}, we validate the theoretical findings through comprehensive numerical experiments. Finally, conclusions are drawn in Section \ref{Sec5}.

\section{Preliminaries and main results}\label{Sec2}
In this section, we first introduce problem settings, basic notations, and some important properties of the problem. Then, the main results are presented.

\subsection{Problem settings, notations, and properties}
The goal of this paper is to analyze the local convergence rate of iterative algorithms near minimizers. 
To this end, in addition to assuming that the objective function $f$ in~\eqref{U-min-p} is bounded from below and of class $C^2$, 
we introduce the following local PL condition.

\begin{definition}\label{Local-conditions}
Let $f : \mathbb{R}^d \to \mathbb{R}$ be a $C^2$ function, and let $x_* \in \mathbb{R}^d$ be a minimizer of $f$. 
We say that $f$ satisfies the local PL condition at $x_*$ if there exist a sufficiently small open neighborhood $U \subset \mathbb{R}^d$ of $x_*$, and a constant $\nu > 0$ such that
\[
\|\nabla f(x)\|^2 \ge 2\nu \bigl( f(x) - f(x_*) \bigr), \quad \text{for all }\; x \in U,
\]
where $\|\cdot\|$ denotes the Euclidean norm.
\end{definition}
Note that the $L$-Lipschitz continuity of the gradient is not assumed explicitly here, as it is automatically satisfied on the bounded set $U$ for a $C^2$ function.

In particular, it is worth noting that for a $C^2$ objective function, if it is coercive or its set of minimizers is bounded, then recent results~\cite{2025Criscitiello,2025Nejma} show that the global PL condition guarantees uniqueness of the minimizer. Thus, the global PL condition is, in some sense, a rather strong assumption.

Next, we introduce the following set and associated function value level constructed from a local minimizer $x_*$:
\begin{align}\label{mathcal_S}
    \mathcal{S} := \left\{ x \in \mathbb{R}^d \,\Big|\, x \text{ is a local minimizer of } f \text{ and } f(x) = f(x_*) \right\}.
\end{align}

In the work of~\cite{2025Fast}, it was shown that the local PL condition defined in~\eqref{Local-conditions} is equivalent to the following local Morse--Bott condition.

\begin{definition}\label{MB}
Let $f : \mathbb{R}^d \to \mathbb{R}$ be a $C^2$ function. 
We say that $f$ satisfies the local Morse--Bott condition near a local minimizer $x_*$ if there exists a sufficiently small open neighborhood $U \subset \mathbb{R}^d$ of $x_*$ such that the set
\[
    \mathcal{S}_U := \mathcal{S} \cap U
\]
is a $C^1$ embedded submanifold of $\mathbb{R}^d$, and for every $x_*' \in \mathcal{S}_U$,
\[
    K_{x_*'} := \ker\big( \nabla^2 f(x_*') \big) 
    = \left\{ v \in \mathbb{R}^d \,\big|\, \nabla^2 f(x_*') [v] = 0 \right\}
    = T_{x_*'} \mathcal{S}_U.
\]
Here, the tangent space $T_{x_*'} \mathcal{S}_U$ is defined by
\[
    T_{x_*'} \mathcal{S}_U 
    := \left\{ \gamma'(0) \in \mathbb{R}^d \,\Big|\, \gamma : (-\varepsilon, \varepsilon) \to \mathcal{S}_U \text{ is a } C^1 \text{ curve with } \gamma(0) = x_*' \right\}.
\]
\end{definition}

\begin{remark}\label{rem:nontrivial}
If $\dim T_{x_*'} \mathcal{S}_U = d$ for some (hence all) $x_*' \in \mathcal{S}_U$, then $\mathcal{S}_U$ is an open subset of $\mathbb{R}^d$. In this case, the condition $f(x_*') = f(x_*)$ for all $x_*' \in \mathcal{S}_U$ implies that $f$ is constant on a neighborhood of $x_*$. We exclude this trivial scenario throughout the paper, as it leads to a completely flat landscape. Consequently, we always assume that $\mathcal{S}_U$ is a proper submanifold of $\mathbb{R}^d$, i.e., $\dim T_{x_*'} \mathcal{S}_U < d$.
\end{remark}

In addition, we introduce the normal space
\[
    N_{x_*'}\mathcal{S}_U := \bigl(T_{x_*'}\mathcal{S}_U\bigr)^\perp,
\]
and let $P_{x_*'} : \mathbb{R}^d \to N_{x_*'}\mathcal{S}_U$ denote the orthogonal projection onto this normal space. In the nontrivial case, i.e., when $\mathcal{S}_U$ is not an open subset of $\mathbb{R}^d$, the local Morse--Bott condition implies that for every $x_*' \in \mathcal{S}_U$, the Hessian $\nabla^2 f(x_*')$ is nondegenerate on $N_{x_*'}\mathcal{S}_U$. Consequently, its eigenvalues restricted to $N_{x_*'}\mathcal{S}_U$ satisfy
\[
    0 < \mu(x_*') := \lambda_{\min}\bigl(\nabla^2 f(x_*')|_{N_{x_*'}\mathcal{S}_U}\bigr)
    \leq 
    \lambda_{\max}\bigl(\nabla^2 f(x_*')|_{N_{x_*'}\mathcal{S}_U}\bigr) =: L(x_*').
\]
Note that $\lambda_{\max}\bigl(\nabla^2 f(x_*')|_{N_{x_*'}\mathcal{S}_U}\bigr)$ is precisely the largest eigenvalue of the full Hessian $\nabla^2 f(x_*')$ on $\mathbb{R}^d$. Indeed, under the local Morse--Bott condition at a local minimizer $x_*'\in\mathcal{S}_{U}$, the Hessian vanishes on the tangent space $T_{x_*'}\mathcal{S}_U$, and $\mathbb{R}^d$ decomposes orthogonally as $T_{x_*'}\mathcal{S}_U \oplus N_{x_*'}\mathcal{S}_U$. Hence all nonzero eigenvalues of $\nabla^2 f(x_*')$ are contained in the normal subspace, and the maximum eigenvalue is attained there. 

We also emphasize that the local constants $\mu(x_*')$ and $L(x_*')$ generally depend on the base point $x_*' \in \mathcal{S}_U$. However, our analysis is purely local. Since $f$ is of class $C^2$ and $\mathcal{S}_U$ is a $C^1$ embedded submanifold, the tangent spaces $T_{x_*'}\mathcal{S}_U$, and hence the normal spaces $N_{x_*'}\mathcal{S}_U$, vary continuously with $x_*'$. It follows that the orthogonal projections $P_{x_*'}$ depend continuously on $x_*'$, and so do the extremal eigenvalues 
$\mu(x_*'), L(x_*')$ by the continuity of the map $x_*' \mapsto P_{x_*'} \nabla^2 f(x_*') P_{x_*'}$.
Therefore, on any sufficiently small neighborhood of the reference point $x_*$, the quantities $\mu(x_*')$ and $L(x_*')$ remain uniformly bounded away from zero and infinity. This mild dependence on the base point can be absorbed into the small parameter $\varepsilon > 0$, which governs both the stepsize restriction and the asymptotic convergence rate. For notational simplicity, we henceforth suppress the explicit dependence on $x_*'$ and denote these uniform local bounds simply by $\mu$ and $L$, which, up to the negligible perturbation governed by $\varepsilon$, coincide with the standard PL constant $\mu$ and Lipschitz gradient constant $L$ in the usual sense.

Finally, for notational convenience, we adopt the following conventions throughout the paper:
\begin{itemize}
    \item The symbol $C > 0$ denotes a generic positive constant that is independent of the iterate $x$ and may vary from line to line. 
    \item We write $a \approx b$ for two nonnegative quantities $a, b$ if there exist constants $C_1, C_2 > 0$, independent of $x$, such that
    \[
        C_1 b \le a \le C_2 b.
    \]
\end{itemize}
\subsection{Main results}
In this subsection, under the local PL condition, we establish a sharp local linear convergence rate for NAG around local minimizers. Despite the failure of the second-order sufficient conditions due to degeneracy of the Hessian, we show that NAG achieves the optimal local contraction factor that matches the optimal one for strongly convex quadratic problems (see~\cite{lessard2016}) up to an arbitrarily small $\varepsilon > 0$. This result highlights the robustness of momentum-based acceleration in nonconvex optimization when the local PL condition is properly exploited.

We consider the local dynamics of NAG near local minimizers $ x_*$. To this end, we introduce the following Lyapunov function : for all $x,v\in \mathbb{R}^d$, 
\begin{align}\label{Lyapunov-function}
	\mathcal{L}(x, v) := f(x) - f(x_*) + \frac{\beta}{2\alpha} \|v\|^2 - \frac{\beta^2}{2(1+\beta)} \nabla^2f(x_*)[v,v],
\end{align}
where $\nabla^2f(x_*)[v,v]:=\big\langle \nabla^2f(x_*)[v], v\big\rangle$. The following result establishes local Lyapunov stability for this sequence.

\begin{thm}\label{Exp-Lyapunov-Stab}
	Let  $f$  satisfy the local PL condition at the local minimizer $x_*\in\mathcal{S}$. Then, for every sufficiently small  $ \varepsilon > 0 $, there exists a sufficiently small open neighborhood $U \subset \mathbb{R}^d$ of $x_*$ such that for any $x^0, x^1 \in U$, the discrete Lyapunov sequence generated by NAG satisfies
	\[
	0\le\mathcal{L}(x^{n+1}, v_{n+1}) \leq \rho_{\varepsilon,\alpha,\beta} \, \mathcal{L}(x^n, v_n)
	\quad \text{for all } n \geq 1,
	\]
	provided that the step size  $ \alpha $ satisfies
	\[
	\alpha \in \left( 0, \frac{2(\beta+1)}{(L+\varepsilon)(2\beta+1)} \right), \quad\text{for all}\; \beta \in [0,1),
	\]
	where  $ \rho_{\varepsilon,\alpha,\beta} \in (0,1) $  is a contraction factor depending on  $\varepsilon$, $ \alpha $, and  $ \beta $.
\end{thm}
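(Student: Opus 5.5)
Near $x_*$ the iteration is a small perturbation of NAG applied to the quadratic model $q(x)=\tfrac12\nabla^2 f(x_*)[x-\bar x,x-\bar x]$, with $\bar x$ a nearby point of $\mathcal{S}_U$. The plan is to show that $\mathcal{L}$ is, up to $O(\varepsilon)$ errors, the standard quadratic Lyapunov function of the linearized NAG map restricted to the normal space. The tangential directions contribute nothing to the first-order decrease, and they are controlled through the local PL (equivalently Morse--Bott) structure.

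\textbf{Step 1: nonnegativity.} Write $H:=\nabla^2 f(x_*)$. The velocity part of $\mathcal{L}$ is $\tfrac{\beta}{2\alpha}\|v\|^2-\tfrac{\beta^2}{2(1+\beta)}H[v,v]$. Since $H\preceq L$, this is nonnegative as soon as $\alpha L\beta\le 1+\beta$. The stepsize condition gives $\alpha L<\tfrac{2(1+\beta)}{2\beta+1}$, which implies $\alpha L\beta<1+\beta$ because $2\beta<2\beta+1$. The part $f(x)-f(x_*)$ is nonnegative for $x$ near $x_*$: points of $U$ near $\mathcal{S}$ satisfy $f\ge f(x_*)$, since $x_*$ is a local minimizer and $\mathcal{S}_U$ is a manifold of minimizers at that level (Definition \ref{MB}).

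\textbf{Step 2: one-step descent.} Set $g:=\nabla f(y^n)$. First expand $f(x^{n+1})=f(y^n-\alpha g)$ by Taylor's theorem with $C^2$ remainder, replacing $\nabla^2 f$ on the segment by $H$ at cost $o(1)\|\cdot\|^2$; the $o(1)$ comes from uniform continuity of $\nabla^2 f$ on a shrinking neighbourhood. Next relate $f(x^n)$ to $f(y^n)$ in the same way. Then write $v_{n+1}=\beta v_n-\alpha g$ and expand $\tfrac{\beta}{2\alpha}\|v_{n+1}\|^2-\tfrac{\beta^2}{2(1+\beta)}H[v_{n+1},v_{n+1}]$. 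Collecting terms, I expect to obtain
\[
\mathcal{L}(x^{n+1},v_{n+1})-\mathcal{L}(x^n,v_n)\le -c_1\,\alpha\|g\|^2-c_2\|v_n\|^2+\varepsilon\,C(\|g\|^2+\|v_n\|^2),
\]
with $c_1,c_2>0$ exactly when $\alpha<\tfrac{2(1+\beta)}{(L+\varepsilon)(2\beta+1)}$. The coefficient $\tfrac{\beta^2}{2(1+\beta)}$ in \eqref{Lyapunov-function} is chosen so that the cross terms $\langle g,v_n\rangle$ and $H[g,v_n]$ cancel or combine into a negative semidefinite quadratic form in $(g,v_n)$. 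Checking that the residual $2\times2$ form, diagonalized in the eigenbasis of $H$, is negative definite for every eigenvalue $\lambda\in[0,L]$ reduces to the stated stepsize bound. The eigenvalue $\lambda=0$ needs separate care because $g$ has no component there to leading order.

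\textbf{Step 3: from descent to contraction (main obstacle).} We need $\mathcal{L}(x^n,v_n)\le C(\alpha\|g\|^2+\|v_n\|^2)$. Because $H$ is bounded, $\mathcal{L}\le f(x^n)-f(x_*)+C\|v_n\|^2$. Using $L$-smoothness and $x^n=y^n-\beta v_n$, we get $f(x^n)-f(x_*)\le 2(f(y^n)-f(x_*))+C\|v_n\|^2$. The local PL inequality then gives $f(y^n)-f(x_*)\le\tfrac{1}{2\nu}\|g\|^2$. Hence $\mathcal{L}\le C'(\|g\|^2+\|v_n\|^2)$, and combined with Step 2 this yields $\mathcal{L}_{n+1}\le(1-c/C')\mathcal{L}_n=:\rho_{\varepsilon,\alpha,\beta}\mathcal{L}_n$.

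The delicate point is invariance: all iterates must stay in $U$ so that PL and the Taylor estimates remain valid. I would handle this by induction. Geometric decay of $\mathcal{L}$ controls $\|v_n\|$, and, through PL, $\|g\|$ as well, since $f-f_*\approx\operatorname{dist}^2$ to $\mathcal{S}$. Then $\sum_n\|x^{n+1}-x^n\|=\sum_n\|v_{n+1}\|\le C\sum_n\rho^{n/2}\sqrt{\mathcal{L}_1}$, so choosing $x^0,x^1$ in a smaller neighbourhood keeps every iterate inside $U$. This closes the induction and proves the theorem.
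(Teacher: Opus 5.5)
Your proposal is correct and follows essentially the paper's route. Both arguments Taylor-expand with $\nabla^2 f(x_*)$ to obtain a negative-definite quadratic decrease under the stated stepsize bound, bound $\mathcal{L}$ from above by that decrease via the local PL inequality, and close the invariance induction by summing $\|v_{n+1}\|\lesssim\rho^{n/2}\sqrt{\mathcal{L}(x^1,v_1)}$. The paper writes the decrease in the variables $(v_{n+1},v_{n+1}-v_n)$, which are an invertible linear image of your $(g,v_n)$ since $v_{n+1}=\beta v_n-\alpha g$. In those variables the definiteness check follows immediately from $0\preceq\nabla^2 f(x_*)\preceq L I$, so no eigenvalue-by-eigenvalue analysis is needed and the eigenvalue $\lambda=0$ needs no separate treatment.
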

The theorem above implies the local linear convergence of NAG, yet the contraction factor cannot be precisely quantified.
Next, for NAG, the optimal local convergence rate is obtained in the following theorem. 

\begin{thm}\label{Opt-convergence}
	Let $f$ satisfy the local PL condition at the local minimizer $x_*\in\mathcal{S}$. Then, for every sufficiently small $\varepsilon > 0$, there exists a sufficiently small open neighborhood $U \subset \mathbb{R}^d$ of $x_*$ such that for any $x^0, x^1 \in U$, the sequence $\{x^n\}_{n\in\mathbb{N}}$ generated by NAG converges linearly to a local minimizer $x_*'\in\mathcal{S}$, i.e.,
	\begin{align*}
		\|x^n-x_*'\|\le C_{\varepsilon}\big(\| x^{1} - x_*' \|+\| x^{0} - x_*' \|\big)(\rho_{\alpha,\beta}+\varepsilon)^n,\quad\forall\; n\ge1,
	\end{align*}
	for the step size  $\alpha$  satisfying
	\begin{align*}
		\alpha \in \left( 0, \frac{2(\beta+1)}{(L+\varepsilon)(2\beta+1)} \right), \quad\text{for all}\; \beta \in [0,1),
	\end{align*}
	where $\rho_{\alpha,\beta}\in(0,1)$ is a contraction factor depending on  $ \alpha $ and  $ \beta $, and $C_{\varepsilon}>0$ is a constant depending on $\varepsilon$. In particular, with the optimal parameter choice
	\begin{align*}
		\alpha_{{\rm opt}}=\frac{4}{3L+\mu}\quad\text{and}\quad \beta_{{\rm opt}}=\frac{\sqrt{3L+\mu}-2\sqrt{\mu}}{\sqrt{3L+\mu}+2\sqrt{\mu}},
	\end{align*}
	 NAG achieves the optimal linear convergence rate
	\begin{align}
			\|x^n-x_*'\|\le C_{\varepsilon} \big(\| x^{1} - x_*' \|+\| x^{0} - x_*' \|\big)\left(\frac{\sqrt{3L+\mu}-2\sqrt{\mu}}{\sqrt{3L+\mu}}+\varepsilon\right)^n,\quad\forall\; n\ge1.
	\end{align}
\end{thm}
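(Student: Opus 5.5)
We follow the two-stage strategy announced in the abstract: first use Theorem~\ref{Exp-Lyapunov-Stab} for coarse convergence and identification of the limit, then linearize around that limit to get the sharp rate.

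\textbf{Stage one: coarse convergence and the limit point.} Theorem~\ref{Exp-Lyapunov-Stab} gives $\mathcal{L}(x^n,v_n)\le \rho_{\varepsilon,\alpha,\beta}^{\,n-1}\mathcal{L}(x^1,v_1)$. For the admissible stepsizes, the quadratic part $\frac{\beta}{2\alpha}\|v\|^2-\frac{\beta^2}{2(1+\beta)}\nabla^2 f(x_*)[v,v]$ is coercive in $v$. Hence $\|v_n\|\le C\rho_0^{n/2}$ with $\rho_0:=\rho_{\varepsilon,\alpha,\beta}$. The increments are therefore summable, so $x^n$ is Cauchy and converges to some $x_*'$. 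Moreover
\[
\|x^n-x_*'\|\le \sum_{k>n}\|v_k\|\le C\rho_0^{n/2}.
\]
Since $f(x^n)-f(x_*)\to 0$, we get $x_*'\in\mathcal{S}$. Shrinking $U$ (via the standard "stay in the neighborhood" argument, using $\mathcal{L}(x^1,v_1)\lesssim \|x^1-x_*\|^2+\|x^0-x_*\|^2$ and the summability) keeps every iterate, and $x_*'$, inside the Morse--Bott neighborhood of Definition~\ref{MB}. This gives an a priori geometric rate $\|x^n-x_*'\|+\|x^{n-1}-x_*'\|\le C r_0^n$ with some $r_0<1$.

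\textbf{Stage two: linearization at $x_*'$.} Write $e_n=x^n-x_*'$ and $H=\nabla^2 f(x_*')$. Since $\nabla f(x_*')=0$,
\[
\nabla f(y^n)=H(y^n-x_*')+R_n,\qquad \|R_n\|\le \omega(\|y^n-x_*'\|)\,\|y^n-x_*'\|,
\]
where $\omega$ is the modulus of continuity of $\nabla^2 f$. The stacked error $z_n=(e_n,e_{n-1})$ then satisfies $z_{n+1}=Tz_n+r_n$, with
\[
T=\begin{pmatrix}(1+\beta)(I-\alpha H) & -\beta(I-\alpha H)\\ I & 0\end{pmatrix},\qquad \|r_n\|\le C\,\omega(Cr_0^n)\,r_0^n .
\]
Diagonalizing $H$ splits $T$ into $2\times 2$ blocks.
\begin{itemize}
\item On $N_{x_*'}\mathcal{S}_U$ the eigenvalues of $H$ lie in $[\mu,L]$ up to $\varepsilon$. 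The blocks have spectral radius $\rho_{\alpha,\beta}$, which is the Lessard et al.\ quantity and equals $\frac{\sqrt{3L+\mu}-2\sqrt{\mu}}{\sqrt{3L+\mu}}$ at $(\alpha_{\rm opt},\beta_{\rm opt})$. The stepsize restriction guarantees $\rho_{\alpha,\beta}<1$.
\item On the tangent space $K=T_{x_*'}\mathcal{S}_U$ we have $H=0$, and the block has eigenvalues $1$ and $\beta$.
\end{itemize}

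\textbf{Main obstacle: the eigenvalue $1$ in the tangent block.} A naive perturbation bound would let this eigenvalue destroy the rate. I would handle it with the a priori decay, not with extra geometry. Because $z_n\to 0$, the solution has the representation
\[
z_n=-\sum_{k\ge n} T^{\,n-k-1}r_k
\]
on the unit-eigenvalue component, i.e.\ a backward sum, which is legitimate because $z_n\to0$ and $T$ restricted there is the identity up to the $\beta$-mode. On the contracting component we use the forward variation-of-constants formula $z_n=T^{n-1}z_1+\sum_{k<n}T^{\,n-1-k}r_k$. Both pieces are then bounded by $\sum_k(\rho_{\alpha,\beta}+\varepsilon)^{n-k}\omega(Cr_0^k)r_0^k$ plus the tail $\sum_{k\ge n}\omega(Cr_0^k)r_0^k$. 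This yields the improved rate $r_1=\max\{\rho_{\alpha,\beta}+\varepsilon,\ r_0\cdot(\text{small})\}$.

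Bootstrapping: iterate, replacing $r_0$ by the improved rate. Since $\omega(Cr^k)\to0$, finitely many iterations, or equivalently choosing $U$ small so that $\omega\le\varepsilon$ along the trajectory, give
\[
\|e_n\|\le C_\varepsilon(\|e_1\|+\|e_0\|)(\rho_{\alpha,\beta}+\varepsilon)^n .
\]
The prefactor is linear in the initial errors because $r_n$ is bounded by a small multiple of $\|z_n\|$.

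The delicate point is that the tangent component does not genuinely grow. The backward sum handles this because the residual $r_k$ is superlinearly small relative to $\|z_k\|$, and Stage one guarantees $\sum\|z_k\|<\infty$.
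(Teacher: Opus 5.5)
Your architecture matches the paper's. Write $\Pi_1$ for the spectral projection of $T$ onto its eigenvalue-$1$ eigenspace. On the tangent block, the $\Pi_1$-coordinate of $z_n$ is, up to the factor $1/(1-\beta)$, the quantity $(I-P_{x_*'})\bigl[(x^n-x_*')-\beta(x^{n-1}-x_*')\bigr]$ that the paper telescopes to infinity. The $\beta$-coordinate is proportional to $(I-P_{x_*'})(x^n-x^{n-1})$, the third entry of the paper's reduced state $X^n$.

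The gap is in how you control the backward sum. You bootstrap with \emph{absolute} bounds, $\|r_k\|\le C\,\omega(Cr_0^k)\,r_0^k$, which give $\|\Pi_1 z_n\|\le C\,\omega(Cr_0^n)\,r_0^n$. For a merely $C^2$ function this is $o(r_0^n)$, but not $O((s\,r_0)^n)$ for any fixed $s<1$. If $\omega(t)=1/\log(1/t)$, each pass gains only a factor of order $1/n$, so no finite number of passes reaches $\rho_{\alpha,\beta}+\varepsilon$. Your scheme improves the exponent only when the Hessian is H\"older continuous, which is exactly the extra regularity the theorem is meant to avoid.

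The variant you call equivalent, shrinking $U$ so that $\omega\le\varepsilon$, does not close either. With $\|r_k\|\le C\varepsilon\|z_k\|$ and only $\|z_k\|\le Cr_0^k$ (let alone mere summability of $\|z_k\|$), the backward sum is still of order $\varepsilon r_0^n$, and the rate stays $r_0$.

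What is missing is the \emph{restarted}, relative form of the coarse estimate: $\|z_k\|\le C\,r_0^{\,k-n}\|z_n\|$ for all $k\ge n$, with $C$ independent of $n$. The paper uses exactly this. It follows from Theorem~\ref{Exp-Lyapunov-Stab}, because the Lyapunov contraction holds from every index $n$ onward. Moreover,
\[
\mathcal{L}(x^n,v_n)\lesssim\|\nabla f(x^n)\|^2+\|v_n\|^2\lesssim\|x^n-x_*'\|^2+\|v_n\|^2,
\]
since $\nabla f(x_*')=0$, and $\|x^k-x_*'\|\le\sum_{j>k}\|v_j\|$.

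With this estimate the backward sum gives $\|\Pi_1 z_n\|\le C\varepsilon\|z_n\|$. The neutral mode is then slaved to the remaining coordinates $w_n=(I-\Pi_1)z_n$, with $\|z_n\|\le 2\|w_n\|$ for small $\varepsilon$. These satisfy $w_{n+1}=T_{\rm rest}w_n+o(\|w_n\|)$, and Lemma~\ref{Nonlinear-Iteration} gives the rate; this is the paper's argument.

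One smaller point: the contracting part contains the $\beta$-mode, so the rate obtained in general is $\max\{\rho_{\alpha,\beta},\beta\}+\varepsilon$, not $\rho_{\alpha,\beta}+\varepsilon$. This is harmless at $(\alpha_{\rm opt},\beta_{\rm opt})$, where $\beta_{\rm opt}\le\rho_{\rm opt}$.
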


In essence, the second theorem refines the first by providing a precise quantification of the contraction factor, building directly on the local linear convergence established therein. Notably, this approach extends seamlessly to the Heavy Ball ordinary differential equation, namely
\begin{align}\label{Accelerated-flow}
	\ddot{x}(t) + \gamma\dot{x}(t) + \nabla f(x(t)) = 0,
\end{align}
yielding the optimal exponential local convergence rate $\sqrt{\mu}$ with $\gamma=2\sqrt{\mu}$ for its trajectories under the assumption that $f \in C^2$ and satisfies the local PL condition. 
\begin{thm}\label{Opt-convergence-flow}
	Let $f$ satisfy the local PL condition at the local minimizer $x_*\in\mathcal{S}$. Then, for every sufficiently small $\varepsilon > 0$, there exists a sufficiently small open neighborhood $\mathcal{U} \subset \mathbb{R}^d \times \mathbb{R}^d$ of $(x_*, 0)$ such that for any initial condition $(x(0), \dot{x}(0)) \in \mathcal{U}$, the trajectories $x(t)$ generated by \eqref{Accelerated-flow} converges exponentially to a local minimizer $x_*'\in\mathcal{S}$, i.e.,
	\begin{align*}
		\|x(t)-x_*'\|\le C_{\varepsilon}\big(\|x(0)-x_*'\|+\|\dot{x}(0)\|\big)e^{-(\rho_{\gamma}-\varepsilon)t},\quad\forall\; t\ge0,
	\end{align*}
	where $\rho_{\gamma}=\frac{\gamma - \sqrt{\max\{0,\, \gamma^2 - 4\mu\}}}{2}$ is a convergence rate depending on $\gamma > 0$, and $C_{\varepsilon} > 0$ is a constant depending on $\varepsilon$. In particular, with the optimal damping parameter
	\begin{align*}
		\gamma=2\sqrt{\mu},
	\end{align*}
	the flow \eqref{Accelerated-flow} achieves the optimal cxponential convergence rate
	\begin{align}
		\rho_{{\rm opt}}=\sqrt{\mu}.
	\end{align}
\end{thm}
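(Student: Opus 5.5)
The proof follows the two-stage strategy announced for the discrete case: first a coarse a priori exponential decay, then a refined linearization at the limit point.

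\emph{Stage 1 (coarse decay).} Rewrite \eqref{Accelerated-flow} as the first-order system $\dot x=v$, $\dot v=-\gamma v-\nabla f(x)$, and take the continuous analogue of \eqref{Lyapunov-function},
\[
\mathcal{E}(x,v)=f(x)-f(x_*)+\tfrac12\|v\|^2+\tfrac{\kappa}{2}\|v\|^2+\kappa\langle \nabla f(x),v\rangle\cdot\tfrac{1}{\gamma}
\]
(a small mixed term with $\kappa>0$ small). Using the local PL inequality, the bound $\|\nabla f(x)\|^2\le 2L'(f(x)-f(x_*))$ (valid near $\mathcal{S}$ since $\nabla f$ is locally Lipschitz and $f-f(x_*)\approx \operatorname{dist}(x,\mathcal{S})^2\approx\|\nabla f\|^2$ by the equivalences cited from \cite{2025Fast,2026HBOpt}), and Cauchy--Schwarz, one gets $\dot{\mathcal E}\le -c\,\mathcal E$ with $\mathcal E\approx f-f(x_*)+\|v\|^2$, as long as the trajectory stays in $U$. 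This yields $\|\dot x(t)\|\le C e^{-ct/2}(\|x(0)-x_*\|+\|\dot x(0)\|)$. Since $\int_0^\infty\|\dot x\|\,dt<\infty$, a standard bootstrap shows that the trajectory never leaves $U$ if $(x(0),\dot x(0))$ is close enough to $(x_*,0)$, and that $x(t)\to x_*'$ for some $x_*'\in\mathcal S_U$, with $\|x(t)-x_*'\|\le Ce^{-ct/2}$ and $\|x_*'-x_*\|$ small.

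\emph{Stage 2 (refined rate).} Set $e=x-x_*'$. Then
\[
\nabla f(x)=H e+r(t),\qquad H=\nabla^2 f(x_*'),\qquad r(t)=\int_0^1\big(\nabla^2 f(x_*'+se)-H\big)e\,ds .
\]
Since $f\in C^2$, $\|r(t)\|\le \omega(\|e(t)\|)\|e(t)\|$, where $\omega$ is a modulus of continuity with $\omega(\|e(t)\|)\to0$ because of Stage~1. This is the only place where $C^2$ enters, and the a priori convergence is exactly what turns the $o(1)$ remainder into a small perturbation; no information about the regularity of $\mathcal S$ is needed. Decompose $e=P e+(I-P)e$ with $P=P_{x_*'}$. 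On the normal space, the linear system $\ddot z+\gamma\dot z+Hz=0$ has eigenvalues $\lambda=(-\gamma\pm\sqrt{\gamma^2-4h})/2$ for $h\in[\mu,L]$, all with real part at most $-\rho_\gamma$. Hence the propagator $e^{At}$ restricted to $N$ satisfies $\|e^{At}\|\le C_\varepsilon e^{-(\rho_\gamma-\varepsilon/2)t}$; the constant $C_\varepsilon$ absorbs the Jordan block that occurs at $\gamma=2\sqrt\mu$. The tangential component $w=(I-P)e$ satisfies $\ddot w+\gamma\dot w=-(I-P)r$, because $(I-P)H=0$. Its velocity therefore decays like the forcing, and $w(t)=-\int_t^\infty\dot w$; this is where the fact that $x_*'$ is the actual limit is used.

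\emph{Closing the estimate (main obstacle).} Apply variation of constants on $N$ and the integral representation for $w$, and bootstrap. Choose $T$ with $\omega(\|e(t)\|)\le\delta$ for $t\ge T$. A Gronwall-type argument on $\phi(t)=\sup_{s\in[T,t]}e^{(\rho_\gamma-\varepsilon)s}(\|e(s)\|+\|\dot e(s)\|)$ gives $\phi\le C+C\delta\phi/\varepsilon$, hence $\phi$ is bounded for $\delta$ small. The delicate point is the tangential part. Its linear operator has the eigenvalue $0$, so we cannot rely on contraction there; instead we use $\|w(t)\|\le\int_t^\infty\|\dot w\|$ together with $\|\dot w(s)\|\le\int_s^\infty e^{-\gamma(\tau-s)}\|r(\tau)\|d\tau$, both controlled by $\delta\phi$ times $e^{-(\rho_\gamma-\varepsilon)t}$, using $\rho_\gamma\le\gamma$. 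On $[0,T]$ we bound by Stage~1, and $T$ depends only on $\varepsilon$ through $\omega$ and the coarse rate. This gives the stated bound with $\|x(0)-x_*'\|+\|\dot x(0)\|$ on the right, since $\|x(0)-x_*\|\le \|x(0)-x_*'\|+\|x_*'-x_*\|$ and the Stage~1 estimates are homogeneous. Finally, $\rho_\gamma=(\gamma-\sqrt{\max\{0,\gamma^2-4\mu\}})/2$ is maximized over $\gamma>0$ at $\gamma=2\sqrt\mu$, giving $\rho_{\rm opt}=\sqrt\mu$.
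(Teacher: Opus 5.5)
Your Stage~1 is fine; your Lyapunov function differs from the Polyak--Shcherbakov one used in the paper but works equally well. Your treatment of the normal component is also fine. The closing bootstrap, however, fails on the tangential component $w=(I-P)(x-x_*')$.

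First, the bound $\|\dot w(s)\|\le\int_s^\infty e^{-\gamma(\tau-s)}\|r(\tau)\|\,d\tau$ is false. Duhamel's formula for $\ddot w+\gamma\dot w=-(I-P)r$ runs forward, $\dot w(s)=e^{-\gamma(s-T)}\dot w(T)-\int_T^s e^{-\gamma(s-\tau)}(I-P)r(\tau)\,d\tau$, and the free term is not small. For $f(x)=\tfrac12x_1^2$ on $\mathbb{R}^2$ one has $r\equiv0$, yet $\dot w(t)=e^{-\gamma t}\dot x_2(0)$. That slip is repairable, because $\gamma\ge\rho_\gamma$.

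The real obstruction is the position. Integrating the tangential equation over $[t,\infty)$ and using $w,\dot w\to0$ gives $\gamma w(t)=-\dot w(t)+\int_t^\infty(I-P)r(\tau)\,d\tau$. So the remainder on the whole future interval $[t,\infty)$ enters unavoidably. Your $\phi(t)$ is a supremum over $[T,t]$ and does not control this tail. If you instead take the supremum over $[T,\infty)$, the step ``$\phi\le C+C\delta\phi/\varepsilon$, hence $\phi$ bounded'' becomes circular. Stage~1 only gives finiteness with the coarse weight $e^{cs}$, and $c$ may be far below $\rho_\gamma-\varepsilon$.

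The missing idea, which is the point of the paper's two-stage argument, is a second use of the a priori estimate. You use it only to make $\omega(\|e\|)$ small; it must also control the future tail by the present state. Since $\dot{\mathcal E}\le-c\,\mathcal E$ holds along the whole trajectory, for $s\ge t$ you get $\|x(s)-x_*'\|\le Ce^{-c(s-t)}(\|x(t)-x_*'\|+\|\dot x(t)\|)$. This uses $\sqrt{\mathcal E}\lesssim\|\nabla f(x)\|+\|\dot x\|\lesssim\|x-x_*'\|+\|\dot x\|$. Hence $\|\int_t^\infty r\|\le (C/c)\sup_{s\ge t}\omega(\|e(s)\|)\,(\|e(t)\|+\|\dot e(t)\|)$, and so $w(t)=-\gamma^{-1}\dot w(t)+o(\|e(t)\|+\|\dot e(t)\|)$. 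In words, the tangential position is slaved to the current tangential velocity.

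The paper uses this to drop $w$ from the state and work with $X=(Pe,\,P\dot e,\,(I-P)\dot e)$. This state satisfies $\dot X=AX+o(\|X\|)$ with spectral bound $-\rho_\gamma$ on $\mathcal{N}_{x_*'}$, so a purely forward stability lemma applies. Your variation-of-constants bootstrap would also close once this pointwise bound on $w(t)$ replaces the tail integrals.

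Separately, the factor $\|x(0)-x_*'\|+\|\dot x(0)\|$ on the right-hand side should come from $\|\nabla f(x(0))\|\le L'\|x(0)-x_*'\|$ in the bound on $\mathcal E(x(0),\dot x(0))$. The triangle inequality you quote leaves an uncontrolled $\|x_*'-x_*\|$.
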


\section{Proof of main results}\label{Sec3} 
In this section, we provide complete proofs of all the results.

\subsection{Technical lemmas}

Before presenting the proof, we introduce several key lemmas that will be instrumental in establishing various aspects of our results. Firstly, under the local PL condition, we provide the upper and lower bounds for this Lyapunov function \eqref{Lyapunov-function}.
\begin{lemma}\label{Lyapunov-Inf-Sup}
	Let  $ f $  satisfy the local PL condition at the local minimizer $ x_*\in\mathcal{S} $. Then, there exists a sufficiently small open neighborhood $U \subset \mathbb{R}^d$ of $x_*$ such that for all $x \in U $, the Lyapunov function $\mathcal{L}(x,v)$ defined in~\eqref{Lyapunov-function} satisfies
	\begin{align*}
		\mathcal{L}(x,v)\approx \left(\big\|\nabla f(x)\big\|^2+\|v\|^2\right),\quad\forall\; \alpha\in\left(0,\frac{1+\beta}{(L+\varepsilon)\beta}\right).
	\end{align*}
	Here, $A \approx B$ means that there exist constants $c_1, c_2 > 0$  such that
	\[
	c_1 B \leq A \leq c_2 B.
	\]
\end{lemma}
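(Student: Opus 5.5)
The plan is to treat the two parts of $\mathcal{L}$ separately: the potential part $f(x)-f(x_*)$ and the kinetic quadratic form
\[
Q(v):=\frac{\beta}{2\alpha}\|v\|^2-\frac{\beta^2}{2(1+\beta)}\nabla^2 f(x_*)[v,v].
\]
For the potential part we aim for $f(x)-f(x_*)\approx\|\nabla f(x)\|^2$ on a small neighborhood $U$. For the kinetic part we aim for $Q(v)\approx\|v\|^2$ whenever $\beta>0$ and $\alpha<\frac{1+\beta}{(L+\varepsilon)\beta}$. Adding the two-sided bounds then gives the claim.

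\emph{Kinetic part.} Since $0\preceq\nabla^2 f(x_*)\preceq L I$, with $L$ the largest Hessian eigenvalue (up to the $\varepsilon$-perturbation discussed in Section~\ref{Sec2}), we get
\[
\Big(\frac{\beta}{2\alpha}-\frac{\beta^2 L}{2(1+\beta)}\Big)\|v\|^2\le Q(v)\le\frac{\beta}{2\alpha}\|v\|^2.
\]
The lower coefficient equals $\frac{\beta}{2}\big(\frac1\alpha-\frac{\beta L}{1+\beta}\big)$. It is strictly positive precisely when $\alpha<\frac{1+\beta}{\beta L}$, and the stated range with $L+\varepsilon$ gives a uniform margin. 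If $\beta=0$ the kinetic term vanishes. In that case the statement should be read with $\beta>0$, or the $\|v\|^2$ term regarded as absent; I would flag this degenerate case explicitly.

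\emph{Potential part.}
\begin{itemize}
\item The lower bound $f(x)-f(x_*)\le\frac{1}{2\nu}\|\nabla f(x)\|^2$ is exactly the local PL inequality of Definition~\ref{Local-conditions}.
\item The reverse bound $\|\nabla f(x)\|^2\le C\,(f(x)-f(x_*))$ is where care is needed, since $f(x_*)$ need not be a global minimum. Shrink $U$ so that $f\ge f(x_*)$ on a ball $B$ containing $U$ with some slack; this is possible because $x_*$ is a local minimizer. Let $L_B$ be the Lipschitz constant of $\nabla f$ on $B$, which exists since $f\in C^2$.
\item For $x\in U$, set $y=x-\frac{1}{L_B}\nabla f(x)$. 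This point stays in $B$ once $U$ is small, because $\|\nabla f(x)\|\to0$ as $x\to x_*$.
\item The descent lemma on $B$ then yields
\[
f(x_*)\le f(y)\le f(x)-\frac{1}{2L_B}\|\nabla f(x)\|^2,
\]
that is, $\|\nabla f(x)\|^2\le 2L_B\,(f(x)-f(x_*))$.
\end{itemize}

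\emph{Combining.} With $c=\min\{\frac{1}{2L_B},c_Q\}$, where $c_Q$ is the kinetic lower coefficient, and $C=\max\{\frac{1}{2\nu},\frac{\beta}{2\alpha}\}$, we obtain
\[
c\big(\|\nabla f(x)\|^2+\|v\|^2\big)\le\mathcal{L}(x,v)\le C\big(\|\nabla f(x)\|^2+\|v\|^2\big)
\]
for all $x\in U$ and all $v\in\mathbb{R}^d$.

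The main obstacle is the localisation in the reverse gradient bound. We must guarantee that the gradient step from $x$ remains inside the region where $x_*$ is minimal and where $\nabla f$ is Lipschitz. This is handled by choosing $U$ as a sufficiently small concentric sub-ball, using continuity of $\nabla f$ and $\nabla f(x_*)=0$.
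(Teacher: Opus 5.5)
Your proof is correct. The kinetic estimate and the PL upper bound match the paper's, but you obtain the key reverse inequality $\|\nabla f(x)\|^2\lesssim f(x)-f(x_*)$ by a genuinely different route.

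\emph{The paper's route.} It invokes the quadratic growth property from Rebjock--Boumal, which follows from the local PL condition and requires the projection $P_{\mathcal{S}}(x)$ onto the minimizer set to exist near $x_*$. This gives $f(x)-f(x_*)\ge C\,\mathrm{dist}(x,\mathcal{S})^2$. The paper then uses $\nabla f(P_{\mathcal{S}}(x))=0$ and local Lipschitz continuity of $\nabla f$ to get $\|\nabla f(x)\|\le C\,\mathrm{dist}(x,\mathcal{S})$. Its remark $\|x-P_{\mathcal{S}}(x)\|\le\|x-x_*\|$ does the same localisation job as your requirement that the gradient step stay in $B$.

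\emph{Your route.} The descent-lemma argument needs only that $x_*$ is a local minimizer and that $\nabla f$ is Lipschitz on a ball. It is more elementary and self-contained: for this direction it uses neither the PL condition nor any structure of $\mathcal{S}$, so PL enters only through the upper bound. Your localisation step closes easily. Since $\|\nabla f(x)\|\le L_B\|x-x_*\|$, you have $\|y-x_*\|\le 2\|x-x_*\|$, so taking $U$ to be the concentric ball of half the radius of $B$ suffices.

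\emph{What each buys.} The paper's route additionally yields quadratic growth in the distance to $\mathcal{S}$, but the lemma itself does not need it.

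\emph{The case $\beta=0$.} Your caveat is well taken. The paper's proof asserts $\bigl(\frac{\beta}{2\alpha}-\frac{\beta^2L}{2(1+\beta)}\bigr)\|v\|^2\ge C\|v\|^2$ on the admissible range, but the endpoint $\frac{1+\beta}{(L+\varepsilon)\beta}$ is undefined at $\beta=0$. In that case $\mathcal{L}(x,v)=f(x)-f(x_*)$ cannot control $\|v\|^2$, so the lemma should be read for $\beta\in(0,1)$.
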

\begin{proof}
	First, since $f$ satisfies the local PL condition at the local minimizer $x_*$, there exists $\sigma_1 > 0$ such that for all $x \in \mathcal{B}_{\sigma_1}(x_*) := \{ y \in \mathbb{R}^d : \|y - x_*\| < \sigma_1 \}$, the PL inequality holds. Together with the non-negativity of the Hessian at $x_*$, i.e., $\nabla^2 f(x_*)[v,v] \ge 0$, we immediately obtain the upper bound
	\[
	\mathcal{L}(x,v)
	\le \frac{1}{2\nu} \big\| \nabla f(x) \big\|^2 
	+ \frac{\beta}{2\alpha} \|v\|^2 
	\le C \left( \big\| \nabla f(x) \big\|^2 + \|v\|^2 \right).
	\]
	
	Next, by the boundedness of the Hessian at $x_*$, i.e., $\nabla^2 f(x_*)[v,v] \le L \|v\|^2$, it follows that
	\[
	\left( \frac{\beta}{2\alpha} - \frac{\beta^2 L}{2(1+\beta)} \right) \|v\|^2 
	\ge C \|v\|^2,\quad \text{whenever}\quad \alpha \in \left( 0, \frac{1+\beta}{L\beta} \right).
	\]
	
	Finally, by~\cite[Lemma~1.4, Proposition~2.2]{2025Fast}, there exists $\sigma_2 > 0$ such that for all $x \in \mathcal{B}_{\sigma_2}(x_*)$, the projection of $x$ onto the set $\mathcal{S}$, denoted $P_{\mathcal{S}}(x)$, is nonempty and the quadratic growth condition holds:
	\[
	f(x) - f(x_*) 
	= f(x) - f(P_{\mathcal{S}}(x)) 
	\ge C \|x - P_{\mathcal{S}}(x)\|^2
	= C\,\mathrm{dist}(x,\mathcal{S})^2.
	\]
	Noting that $\|x - x_*\| \ge \|x - P_{\mathcal{S}}(x)\|$ and combining this with the local Lipschitz continuity of $\nabla f(x)$, there exists a sufficiently small constant $\sigma \in (0, \min\{\sigma_1, \sigma_2\}]$ such that
	\[
	f(x) - f(x_*) \ge C \big\| \nabla f(x) \big\|^2, \quad \forall\; x \in \mathcal{B}_\sigma(x_*).
	\]
	
	Combining the above estimates, we conclude the lower bound
	\[
	\mathcal{L}(x, v) \ge C \left( \big\| \nabla f(x) \big\|^2 + \|v\|^2 \right),
	\]
	which together with the upper bound yields the desired equivalence.
\end{proof}

To prove \textbf{Theorem~\ref{Opt-convergence}}, we adopt the same decomposition strategy as in~\cite{2025Gupta,2026HBOpt}, splitting the full space into tangent and normal components to analyze the convergence rate. We now introduce the matrix $\mathcal{G}_{\alpha}(x_*') : N_{x_*'}\mathcal{S}_{U} \to N_{x_*'}\mathcal{S}_{U}$ defined by
\begin{align*}
	\mathcal{G}_{\alpha}(x_*') 
	:= \big( I - \alpha\nabla^2 f(x_*') \big) P_{x_*'}.
\end{align*}
We also define the block matrix $G_{\alpha,\beta}(x_*')$ acting on the invariant subspace $\mathcal{N}_{x_*'}:=N_{x_*'}\mathcal{S}_{U} \times N_{x_*'}\mathcal{S}_{U} \times T_{x_*'}\mathcal{S}_{U}$ as
\begin{align}\label{G-block-oper}
	G_{\alpha,\beta}(x_*') :=
	\begin{pmatrix}
		(1+\beta)\,\mathcal{G}_{\alpha}(x_*') & -\beta\,\mathcal{G}_{\alpha}(x_*') & \mathbf{0} \\
		I & \mathbf{0} & \mathbf{0} \\
		\mathbf{0} & \mathbf{0} & \beta\, I
	\end{pmatrix}.
\end{align} 
The spectral radius of the matrix $G_{\alpha,\beta}(x_*')|_{\mathcal{N}_{x_*'}}$ is characterized as follows.
 
\begin{lemma}\label{G-spectrum}
	Let $f$ satisfy the local PL condition at the local minimizer $ x_* $. Then the spectral radius of the matrix  $ G_{\alpha,\beta}(x_*')|_{\mathcal{N}_{x_*'}} $  is strictly less than  $ 1 $  if
	\begin{align*}
		\alpha\in\left(0,\frac{2(\beta+1)}{L(2\beta+1)}\right)\quad\text{for all}\;\beta\in[0,1).
	\end{align*}
	Moreover, the spectral radius is minimized over 
	\[
	(\alpha,\beta) \in \left(0,\frac{2(\beta+1)}{L(2\beta+1)}\right) \times [0,1)
	\] 
	 by the optimal choice
	\begin{align*}
	\alpha_{\mathrm{opt}} = \frac{4}{3L + \mu}, \qquad 
	\beta_{\mathrm{opt}} = \frac{\sqrt{3L + \mu} - 2\sqrt{\mu}}{\sqrt{3L + \mu} + 2\sqrt{\mu}},
	\end{align*}
	and the minimal value is given by
	\begin{align*}
	\rho_{\mathrm{opt}}\bigl( G_{\alpha,\beta}(x_*') \bigr) 
	= \frac{\sqrt{3L + \mu} - 2\sqrt{\mu}}{\sqrt{3L + \mu}}.
	\end{align*}
\end{lemma}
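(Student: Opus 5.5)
The plan is to reduce the spectral analysis of $G_{\alpha,\beta}(x_*')|_{\mathcal{N}_{x_*'}}$ to a family of $2\times 2$ scalar problems by diagonalizing the Hessian on the normal space. By the local Morse--Bott condition (equivalent to the local PL condition), $\nabla^2 f(x_*')$ is symmetric, maps $N_{x_*'}\mathcal{S}_U$ into itself, and has eigenvalues $\lambda\in[\mu,L]$ there. Take an orthonormal eigenbasis $\{e_i\}$ of $N_{x_*'}\mathcal{S}_U$. Then $\mathcal{G}_\alpha(x_*')e_i=(1-\alpha\lambda_i)e_i$, and $G_{\alpha,\beta}$ splits into two pieces. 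The third block acts on $T_{x_*'}\mathcal{S}_U$ as $\beta I$, contributing the eigenvalue $\beta<1$. On each plane $\mathrm{span}\{(e_i,0),(0,e_i)\}$ it acts by
\[
M(\lambda)=\begin{pmatrix}(1+\beta)(1-\alpha\lambda) & -\beta(1-\alpha\lambda)\\ 1 & 0\end{pmatrix}.
\]
Hence the spectral radius equals $\max\{\beta,\max_{\lambda\in[\mu,L]}\rho(M(\lambda))\}$, where the inner maximum is taken over the actual eigenvalues. Since $\mu$ and $L$ are attained, this coincides with the maximum over the endpoints plus interior behaviour.

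\textbf{Step 1: stability.} The characteristic polynomial of $M(\lambda)$ is $z^2-(1+\beta)h z+\beta h$ with $h=1-\alpha\lambda$. I will apply the Jury (Schur--Cohn) conditions for $z^2+a_1z+a_0$, namely $|a_0|<1$ and $|a_1|<1+a_0$:
\begin{itemize}
\item $|\beta h|<1$ holds automatically for $h\in(-1,1)$;
\item $(1+\beta)|h|<1+\beta h$. For $h\ge 0$ this reduces to $h<1$, i.e.\ $\lambda>0$. For $h<0$ it reads $-(1+\beta)h<1+\beta h$, i.e.\ $h>-1/(1+2\beta)$, which is $\alpha\lambda<2(1+\beta)/(1+2\beta)$.
\end{itemize}
Imposing this at $\lambda=L$ gives exactly the stated range for $\alpha$. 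Together with $\beta<1$, this yields spectral radius $<1$.

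\textbf{Step 2: optimization.} This is essentially the computation of \cite{lessard2016}, and I would follow it. The roots satisfy $|z|=\sqrt{\beta h}$ when the discriminant $(1+\beta)^2h^2-4\beta h\le 0$, i.e.\ $0\le h\le 4\beta/(1+\beta)^2$. Otherwise the radius is the larger real root in modulus, which grows with $|h|$ away from the complex region.

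Optimality should come from balancing the two ends:
\begin{itemize}
\item at $\lambda=\mu$, $h_\mu=1-\alpha\mu$ sits exactly on the double-root boundary $h_\mu=4\beta/(1+\beta)^2$, giving radius $\sqrt{\beta h_\mu}=2\beta/(1+\beta)$;
\item at $\lambda=L$, the negative $h_L$ produces a real root of the same modulus.
\end{itemize}
I will check that $\alpha=4/(3L+\mu)$ and the stated $\beta$ satisfy both conditions, and that $2\beta/(1+\beta)=1-2\sqrt{\mu}/\sqrt{3L+\mu}$. Also $\beta\le 2\beta/(1+\beta)$, so the tangent block never dominates.

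\textbf{Main obstacle.} The hardest step is proving that this choice is a true minimizer over the whole admissible region, not merely a balancing point. Following \cite{lessard2016}, I plan a perturbation argument: any change of $(\alpha,\beta)$ either increases $h_\mu$ (which raises $\sqrt{\beta h_\mu}$ or moves $\lambda=\mu$ into the real-root regime where the radius is larger) or decreases $h_L$ (which enlarges the negative real root). To make this rigorous, I would use monotonicity of the root modulus in $h$ on each regime and a short case analysis. Alternatively, I would simply invoke the known result for quadratics, since the normal block is exactly NAG applied to a quadratic with Hessian spectrum in $[\mu,L]$.
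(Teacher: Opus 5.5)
Your proposal is correct and follows essentially the same route as the paper. Both arguments reduce to the per-eigenvalue quadratic $z^2-(1+\beta)hz+\beta h$ with $h=1-\alpha\lambda$, verify the Schur (Jury) conditions $1-h>0$, $1+(1+2\beta)h>0$, $\beta|h|<1$, cite \cite{lessard2016} for the optimal tuning, and use $\beta_{\mathrm{opt}}\le 2\beta_{\mathrm{opt}}/(1+\beta_{\mathrm{opt}})=\rho_{\mathrm{opt}}$ so the tangent eigenvalue never dominates.

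Your explicit check that $h_\mu$ lies on the double-root boundary and $h_L$ gives a real root of equal modulus is a useful addition the paper omits. However, your perturbation sketch does not handle joint changes such as increasing $\alpha$ while decreasing $\beta$, because the root moduli depend on $\beta$ directly and not only through $h_\mu,h_L$. So the optimality claim genuinely rests on the citation, exactly as in the paper.
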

\begin{proof}
	It is clear that the spectral radius of  $ G_{\alpha,\beta}(x_*')|_{\mathcal{N}_{x_*'}} $  is determined by the $ \beta $  and the upper-left  $ 2\times 2 $  block matrix  
	\begin{align*}
	\widetilde{G}_{\alpha,\beta}(x_*') :=
	\begin{pmatrix}
		(1+\beta)\mathcal{G}_\alpha(x_*') & -\beta\mathcal{G}_\alpha(x_*') \\
		I & \mathbf{0}
	\end{pmatrix}.
	\end{align*}
	Note that each $ \lambda \in{ \rm Spec}\big(\widetilde{G}_{\alpha,\beta}(x_*')\big) $ satisfies the quadratic equation
	\begin{align}\label{eq:char_eq}
		\lambda^2 - (1+\beta)\mu_{\alpha}\lambda + \mu_{\alpha}\beta = 0,
	\end{align}
	for some  $ \mu_{\alpha} \in{ \rm Spec}\bigl(\mathcal{G}_\alpha(x_*')\bigr) $.
	
	Below, we can discuss what choices of parameter  $ \alpha $ ensure that for all $\beta\in[0,1)$, all eigenvalues  $ \lambda $  lie inside the unit circle. This depends solely on  $ |\lambda| $. Using the quadratic formula, we obtain the following expression 
	\begin{align*}
	|\lambda|=\left\{
	\begin{aligned}
		&\frac{1}{2}(1+\beta)|\mu_{\alpha}|+\frac{1}{2}\sqrt{D}&\quad\text{if}\quad D\ge0,\\
		&\sqrt{\beta|\mu_{\alpha}|}&\quad\text{otherwise},
	\end{aligned}
	\right.
	\end{align*}
	where $D=(1+\beta)^2\mu^2_{\alpha}-4\beta\mu_{\alpha}$. To ensure that all eigenvalues $ \lambda $ of the linearized iteration operator satisfy $ |\lambda| < 1 $ for every $ \mu_\alpha \in {\rm Spec}(\mathcal{G}_\alpha(x_*')) \subseteq [\,1 - \alpha L,\; 1 - \alpha \mu\,] $ and for all $\beta \in [0,1)$, the parameter $ \alpha > 0 $ must satisfy the classical Schur conditions:  
	\begin{align*}
	 1 - \mu_\alpha > 0,\quad 1 + (1 + 2\beta)\mu_\alpha > 0,\quad\text{and}\quad \beta |\mu_\alpha| < 1.
	\end{align*}
	These inequalities are guaranteed uniformly when  
	\begin{align*}
		\alpha\in\left(0,\frac{2(\beta+1)}{L(2\beta+1)}\right)\quad\text{for all}\;\beta\in[0,1).
	\end{align*}
	Finally, according to \cite[Proposition 1]{lessard2016}, the optimal parameters
	\[
	\alpha_{\mathrm{opt}} = \frac{4}{3L + \mu}, \qquad 
	\beta_{\mathrm{opt}} = \frac{\sqrt{3L + \mu} - 2\sqrt{\mu}}{\sqrt{3L + \mu} + 2\sqrt{\mu}},
	\]
	minimize the spectral radius of  $ \widetilde{G}_{\alpha,\beta}(x_*') $ with respect to $\alpha,\beta$, yielding
	\[
	\rho_{\mathrm{opt}}\bigl( \widetilde{G}_{\alpha,\beta}(x_*') \bigr) 
	= \frac{\sqrt{3L + \mu} - 2\sqrt{\mu}}{\sqrt{3L + \mu}}.
	\]
	A direct computation shows that  $ \beta_{\mathrm{opt}} \le \rho_{\mathrm{opt}}\bigl( \widetilde{G}_{\alpha,\beta}(x_*') \bigr) $ with equality only when  $ \mu = L $. 
	Since 
	\[
	{\rm Spec}\bigl(G_{\alpha,\beta}(x_*')|_{\mathcal{N}_{x_*'}}\bigr) 
	= {\rm Spec}\bigl(\widetilde{G}_{\alpha,\beta}(x_*')\bigr) \cup \{\beta\},
	\]
	the spectral radius of ${\rm Spec}\bigl(G_{\alpha,\beta}(x_*')|_{\mathcal{N}_{x_*'}}\bigr) $ is governed by the larger of the two quantities. Hence,
	\[
	\rho_{\mathrm{opt}}\bigl( G_{\alpha,\beta}(x_*')|_{\mathcal{N}_{x_*'}} \bigr) 
	= \frac{\sqrt{3L + \mu} - 2\sqrt{\mu}}{\sqrt{3L + \mu}}.
	\]
\end{proof}

The following is a classical result in functional analysis that relates the local convergence rate of a nonlinear iteration to the spectral radius of its linear part. It is commonly used in the stability analysis of ordinary differential equations and dynamical systems.
\begin{lemma}\label{Nonlinear-Iteration}
	Suppose that the linear operator $T$ on a Hilbert space $X$ satisfies the condition $\rho(T)=\rho<1$, and the sequence $\big\{v^n\big\}_{n\in\mathbb{N}}\subset X$ satisfies:
	\begin{align*}
		v^{n+1}=Tv^n+Y(v^n)\quad and \quad \lim\limits_{\|v\|_{X}\to0}\frac{\|Y(v)\|_{X}}{\|v\|_{X}}=0.
	\end{align*}
	Then, for all sufficiently small $\varepsilon$, there exists $\sigma$ such that for all $\|v^0\|_X\le\sigma$, 
	\begin{align*}
		\|v^n\|_{X}\le C_{\varepsilon}\|v^0\|_X(\rho+\varepsilon)^n.
	\end{align*}
\end{lemma}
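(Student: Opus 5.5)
The plan is the standard adapted-norm argument for a nonlinear perturbation of a linear map whose spectral radius is below one. Fix $\varepsilon>0$ small enough that $\rho+2\varepsilon<1$. First, I will construct an equivalent norm $\|\cdot\|_\varepsilon$ on $X$ in which $T$ is a contraction with factor at most $\rho+\varepsilon/2$. By Gelfand's formula $\rho(T)=\lim_{k\to\infty}\|T^k\|^{1/k}$, there is $K$ with $\|T^k\|\le(\rho+\varepsilon/2)^k$ for all $k\ge K$. Hence $M_\varepsilon:=\sup_{k\ge0}\|T^k\|(\rho+\varepsilon/2)^{-k}<\infty$. Define
\begin{align*}
\|v\|_\varepsilon:=\sup_{k\ge 0}(\rho+\varepsilon/2)^{-k}\|T^kv\|_X .
\end{align*}
Then $\|v\|_X\le\|v\|_\varepsilon\le M_\varepsilon\|v\|_X$, and directly from the definition
\begin{align*}
\|Tv\|_\varepsilon=\sup_{k\ge0}(\rho+\varepsilon/2)^{-k}\|T^{k+1}v\|_X\le(\rho+\varepsilon/2)\|v\|_\varepsilon .
\end{align*}
(In finite dimensions one could instead use a Jordan-form weighted basis, but the Gelfand construction works on any Banach space, so I will use it.)

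Second, I will control the remainder. From $\|Y(v)\|_X/\|v\|_X\to0$, choose $\sigma_0>0$ such that $\|Y(v)\|_X\le \frac{\varepsilon}{2M_\varepsilon}\|v\|_X$ whenever $\|v\|_X\le\sigma_0$. For such $v$, norm equivalence gives
\begin{align*}
\|Y(v)\|_\varepsilon\le M_\varepsilon\|Y(v)\|_X\le\tfrac{\varepsilon}{2}\|v\|_X\le\tfrac{\varepsilon}{2}\|v\|_\varepsilon .
\end{align*}
Combining the two bounds yields $\|v^{n+1}\|_\varepsilon\le(\rho+\varepsilon)\|v^n\|_\varepsilon$, as long as $\|v^n\|_X\le\sigma_0$.

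Third, I will show by induction that the iterates never leave this ball. Set $\sigma:=\sigma_0/M_\varepsilon$. If $\|v^0\|_X\le\sigma$, then $\|v^0\|_\varepsilon\le\sigma_0$. Suppose $\|v^j\|_\varepsilon\le(\rho+\varepsilon)^j\|v^0\|_\varepsilon$ for all $j\le n$. Then $\|v^n\|_X\le\|v^n\|_\varepsilon\le\sigma_0$, so the one-step estimate applies and propagates the bound to step $n+1$. Converting back to the original norm gives
\begin{align*}
\|v^n\|_X\le\|v^n\|_\varepsilon\le M_\varepsilon(\rho+\varepsilon)^n\|v^0\|_X ,
\end{align*}
which is the claim with $C_\varepsilon=M_\varepsilon$.

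The main obstacle is the first step: an arbitrary norm only gives $\|T\|\ge\rho$, possibly much larger when $T$ is non-normal. The renormed space is what converts the spectral information into a genuine one-step contraction. The price is the constant $C_\varepsilon=M_\varepsilon$, which may blow up as $\varepsilon\to0$ when $T$ has nontrivial Jordan blocks on the spectral circle. This is exactly why the statement has $\rho+\varepsilon$ and an $\varepsilon$-dependent constant rather than the rate $\rho$ itself.
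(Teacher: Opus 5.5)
Your proof is correct, but it is organized differently from the paper's. Both arguments take their spectral input from Gelfand's formula.

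The paper stays in the original norm. It uses $\|T^n\|\le C_\varepsilon(\rho+\varepsilon/3)^n$ and $\|Y(v)\|_X\le\frac{\varepsilon}{3C_\varepsilon}\|v\|_X$ on a small ball, and expands $v^n=T^nv^0+\sum_{k=0}^{n-1}T^{n-1-k}Y(v^k)$ by variation of constants. It then closes with a discrete Gronwall inequality, which gives the rate $(\rho+\varepsilon/3)\bigl(1+\frac{\varepsilon}{3\rho+\varepsilon}\bigr)=\rho+2\varepsilon/3$. Keeping the iterates in the ball is handled by an induction over the whole history.

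You instead build the Gelfand bound into the adapted norm $\|v\|_\varepsilon=\sup_{k\ge0}(\rho+\varepsilon/2)^{-k}\|T^kv\|_X$, in which $T$ is a genuine contraction. The perturbation then enters only through the one-step estimate $\|v^{n+1}\|_\varepsilon\le(\rho+\varepsilon)\|v^n\|_\varepsilon$. What this buys you:
\begin{itemize}
\item no Gronwall lemma and no summation over past iterates;
\item a trivial trapping step, since the $\|\cdot\|_\varepsilon$-ball of radius $\sigma_0$ is forward invariant and $\|v^n\|_\varepsilon$ decreases monotonically, i.e.\ it acts as a discrete Lyapunov function;
\item an explicit constant, since $M_\varepsilon$ is the optimal $C$ in $\|T^k\|\le C(\rho+\varepsilon/2)^k$, i.e.\ the paper's $C_\varepsilon$ with $\varepsilon/2$ in place of $\varepsilon/3$.
\end{itemize}

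The paper's route avoids renorming. It is the direct discrete counterpart of the variation-of-constants/Gronwall reasoning it later invokes for the continuous-time flow.

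Neither proof uses the inner product, so both hold verbatim on any Banach space. That your $\|\cdot\|_\varepsilon$ is not a Hilbert norm is therefore harmless.
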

\begin{proof}
	Based on the discrete Gronwall inequality, the result is standard. Since $$\lim\limits_{n\to\infty}\big\|T^n\big\|^{\frac{1}{n}}=\rho<1,$$ 
    then for any sufficiently small $\varepsilon > 0$, there exists a constant $C_{\varepsilon}$ depending on $\varepsilon$ such that 
    $$\big\|T^n\big\|\le C_{\varepsilon}(\rho+\varepsilon/3)^n,\quad \text{for all}\; n\in\mathbb{N}.$$ 
    The condition $\lim\limits_{\|v\|_{X}\to0}\big\|Y(v)\big\|_{X}/\|v\|_{X}=0$ indicates that for any sufficiently small $\varepsilon$, there exists a small enough $\sigma_1$ such that for all $\|v\|_X\le\sigma_1$, there holds $$\big\|Y(v)\big\|_{X}\le\frac{\varepsilon}{3C_{\varepsilon}}\big\|v\big\|_{X}.$$ 
    Let $\sigma\le\frac{\sigma_1}{(1+C_{\varepsilon})}$, we use mathematical induction to prove $$\|v^n\|_{X}\le\sigma_1,\quad\text{for all}\; n\ge 0.$$  
    Obviously, $n=0$ is true. Then let us assume $\|v^{k}\|_{X}\le\sigma_1$ for all $k\le n-1\ (n\ge2)$. Hence, the following inequality holds for $k=n$
	\begin{align*}
		\big\|v^{n}\big\|_{X}&=\big\|Tv^{n-1}+Y(v^{n-1})\big\|_{X}\\
		&=\big\|T^2v^{n-2}+TY(v^{n-2})+Y(v^{n-1})\big\|_{X}
		=\Bigg\|T^{n}v^{0}+\sum\limits_{k=0}^{n-1}T^{n-1-k}Y(v^{k})\Bigg\|_{X}\\
		&\le\big\|T^{n}v^{0}\big\|_{X}+\sum\limits_{k=0}^{n-1}\big\|T^{n-1-k}\big\|\big\|Y(v^{k})\big\|_{X}\\
		&\le C_{\varepsilon}\big\|v^{0}\big\|_{X}(\rho+\varepsilon/3)^{n}+\sum\limits_{k=0}^{n-1}(\rho+\varepsilon/3)^{n-1-k}\frac{\varepsilon}{3}\big\|v^{k}\big\|_{X}\\
		\Longrightarrow&\qquad (\rho+\varepsilon/3)^{-n}\big\|v^{n}\big\|_{X}\le C_{\varepsilon}\big\|v^{0}\big\|_{X}+\sum\limits_{k=0}^{n-1}\frac{\varepsilon}{3\rho+\varepsilon}(\rho+\varepsilon/3)^{-k}\big\|v^{k}\big\|_{X}.
	\end{align*}
	Applying the classical discrete Gronwall inequality, we derive
	\begin{align*}
		(\rho+\varepsilon/3)^{-n}\big\|v^{n}\big\|_{X}\le C_{\varepsilon}\|v^{0}\|_{X}\Bigg(1+\frac{\varepsilon}{3\rho+\varepsilon}\Bigg)^{n}\Longrightarrow\big\|v^n\big\|_{X}\le C_{\varepsilon}\|v^{0}\|_{X}(\rho+\varepsilon)^{n}\le\sigma_1.
	\end{align*}
	This not only completes the induction but also proves the conclusion.
\end{proof}

With this, we are ready to prove the theorems.

\subsection{Proof of Theorem \ref{Exp-Lyapunov-Stab}}
\begin{proof}[Proof of {\bf Theorem \ref{Exp-Lyapunov-Stab}}]
		Define 
	\[
	e_n := |v_{n+1} - v_n| + |v_{n+1}|.
	\]
	Recall that in NAG the extrapolation point is given by
	\[
	y^n = x^n + \beta v_n, \quad x^{n+1} = y^n - \alpha \nabla f(y^n),
	\]
	so that
	\[
	x^{n+1} - y^n = v_{n+1}-\beta v_n, \qquad x^n - y^n = -\beta v_n.
	\]
	It follows that both displacements are controlled by $e_n$:
	\begin{align*}
		\|x^{n+1} - y^n\| &= \|v_{n+1} - \beta v_n\| 
		= \|(1-\beta)v_{n+1} + \beta(v_{n+1} - v_n)\| \le C \|e_n\|, \\
		\|x^n - y^n\| &= \|\beta v_n\| = \beta \|v_n-v_{n+1}+v_{n+1}\| \le C \|e_n\|.
	\end{align*}
	Under the $C^2$ regularity of $f$, we expand both $f(x^{n+1})$ and $f(x^n)$ around the intermediate point $y^n$ near $x_*$. Using the continuity of $\nabla^2 f$ at $x_*$, 
	we obtain
	\begin{align*}
		f(x^{n+1})
		&= f(y^n) + \langle \nabla f(y^n),\, x^{n+1} - y^n \rangle
		+ \frac{1}{2} \nabla^2 f(x_*)[x^{n+1} - y^n,\, x^{n+1} - y^n] + o(\|e_n\|^2), \\
		f(x^n) 
		&= f(y^n) + \langle \nabla f(y^n),\, x^n - y^n \rangle
		+ \frac{1}{2} \nabla^2 f(x_*)[x^n - y^n,\, x^n - y^n] + o(\|e_n\|^2).
	\end{align*}
	Subtracting these two expansions and using the identities
	\begin{align*}
		\big\langle\nabla f(y^n),\, x^{n+1} - x^n \big\rangle
		&= -\frac{1}{\alpha}\big\langle v_{n+1}-\beta v_n ,\, v_{n+1} \big\rangle\\
		&= -\frac{1}{\alpha}\|v_{n+1}\|^2 + \frac{\beta}{\alpha}\langle v_n, v_{n+1}\rangle  \\
		&= -\frac{1-\beta}{\alpha}\|v_{n+1}\|^2 -\frac{\beta}{2\alpha}\|v_{n+1}\|^2 + \frac{\beta}{2\alpha}\|v_n\|^2 
		- \frac{\beta}{2\alpha}\|v_{n+1} - v_n\|^2
	\end{align*}
	and
	\begin{align*}
		&\frac{1}{2}\nabla^2f(x_*)[x^{n+1} - y^n,x^{n+1} - y^n] - \frac{1}{2}\nabla^2f(x_*)[x^n - y^n,x^n - y^n] \\
		=& \frac{1}{2}\nabla^2f(x_*)[v_{n+1},v_{n+1}-2\beta v_n]\\
		=& \frac{1 - \beta}{2}\nabla^2f(x_*)[v_{n+1},v_{n+1}] 
		- \frac{\beta}{2}\nabla^2f(x_*)[v_n,v_n]
		+ \frac{\beta}{2}\nabla^2f(x_*)[v_{n+1} - v_n,v_{n+1} - v_n]\\
		=& \frac{\beta^2}{2(1+\beta)}\nabla^2f(x_*)[v_{n+1},v_{n+1}]-\frac{\beta^2}{2(1+\beta)}\nabla^2f(x_*)[v_n,v_n]+ \frac{1 + \beta - 2\beta^2}{2(1+\beta)} \nabla^2f(x_*)[v_{n+1},v_{n+1}] 
		\\
		&\hspace{1.5cm}- \frac{\beta}{4(1+\beta)} \nabla^2f(x_*)[v_{n+1} + v_n,v_{n+1} + v_n] + \frac{2\beta^2 + \beta}{4(1+\beta)} \nabla^2f(x_*)[v_{n+1} - v_n,v_{n+1} - v_n],
	\end{align*}
	we arrive at
	\begin{align*}
		\mathcal{L}(x^{n+1}, v_{n+1})
		&= \mathcal{L}(x^n, v_n) 
		- \frac{1 - \beta}{\alpha} \|v_{n+1}\|^2 
		- \frac{\beta}{2\alpha} \|v_{n+1} - v_n\|^2 \\
		&\quad - \frac{\beta}{4(1+\beta)} \nabla^2f(x_*)[v_{n+1} + v_n,v_{n+1} + v_n]
		+ \frac{1 + \beta - 2\beta^2}{2(1+\beta)} \nabla^2f(x_*)[v_{n+1},v_{n+1}] \\
		&\quad + \frac{2\beta^2 + \beta}{4(1+\beta)} \nabla^2f(x_*)[v_{n+1} - v_n,v_{n+1} - v_n] + o(\|e_n\|^2).
	\end{align*}
	Since $\nabla^2 f(x_*) \preceq L I$, we have $\nabla^2 f(x_*)[v,v] \le L \|v\|^2$ for all $v \in \mathbb{R}^d$. Consequently, we further get 
	\begin{align*}
		\mathcal{L}(x^{n+1}, v_{n+1})
		&\le \mathcal{L}(x^n, v_n) 
		- \left( \frac{1 - \beta}{\alpha} - \frac{1 + \beta - 2\beta^2}{2(1+\beta)} L \right) \|v_{n+1}\|^2 \\
		&\quad - \left( \frac{\beta}{2\alpha} - \frac{2\beta^2 + \beta}{4(1+\beta)} L \right) \|v_{n+1} - v_n\|^2 + o(\|e_n\|^2).
	\end{align*}
	Assume that $\{x^n\}_{n\in\mathbb{N}} \subset \mathcal{B}_{\sigma_1}(x_*)$ for some sufficiently small $\sigma_1 > 0$. By the triangle inequality, we have
	\[
	\|v_n\| = \|x^n - x^{n-1}\| \le \|x^n - x_*\| + \|x^{n-1} - x_*\| \le 2\sigma_1.
	\]
	For sufficiently small $\sigma_1 > 0$, the $C^2$ regularity of $f$ ensures that all higher-order remainder terms are dominated by $\varepsilon \|e_n\|^2$ for a sufficiently small $\varepsilon > 0$. Consequently, the Lyapunov function satisfies
	\begin{align*}
		\mathcal{L}(x^{n+1}, v_{n+1})
		&\le \mathcal{L}(x^n, v_n) 
		- \left( \frac{1 - \beta}{\alpha} - \frac{1 + \beta - 2\beta^2}{2(1+\beta)} (L + \varepsilon) \right) \|v_{n+1}\|^2 \\
		&\quad - \left( \frac{\beta}{2\alpha} - \frac{2\beta^2 + \beta}{4(1+\beta)} (L + \varepsilon) \right) \|v_{n+1} - v_n\|^2.
	\end{align*}
	The coefficients of $\|v_{n+1}\|^2$ and $\|v_{n+1} - v_n\|^2$ are positive provided that
	\[
	\frac{1 - \beta}{\alpha} > \frac{1 + \beta - 2\beta^2}{2(1+\beta)} (L + \varepsilon), \quad
	\frac{\beta}{2\alpha} > \frac{2\beta^2 + \beta}{4(1+\beta)} (L + \varepsilon),
	\]
	which are equivalent to
	\[
	\alpha \in \left( 0, \frac{2(\beta+1)}{(L+\varepsilon)(2\beta+1)} \right), \quad \text{for all } \beta \in [0,1).
	\]
	Under this condition, there exists a constant $C_{\varepsilon,\alpha,\beta} > 0$ such that
	\[
	\mathcal{L}(x^{n+1}, v_{n+1}) 
	\le \mathcal{L}(x^n, v_n) - C_{\varepsilon,\alpha,\beta} \big( \|v_{n+1}\|^2 + \|v_{n+1} - v_n\|^2 \big).
	\]
	Moreover, it can be verified that
	\[
	\frac{2(\beta+1)}{(L+\varepsilon)(2\beta+1)} \le \frac{\beta+1}{L\beta},
	\]
	which shows that the chosen $\alpha$ is compatible with the condition in \textbf{Lemma~\ref{Lyapunov-Inf-Sup}} when $\sigma_1$ is sufficiently small.
	Next, we invoke the higher-order expansion of NAG:
	\[
	v_{n+1} = \beta v_n - \alpha \nabla f(x^n) - \alpha \beta \nabla^2 f(x^n)[v_n] + o(\|v_n\|),
	\]
	which implies that, near $x_*$,
	\[
	\mathcal{L}(x^n, v_n) \approx \|\nabla f(x^n)\|^2 + \|v_n\|^2 
	\le C \big( \|v_{n+1}\|^2 + \|v_{n+1} - v_n\|^2 \big).
	\]
	Consequently, there exists a constant $C'_{\varepsilon,\alpha,\beta}>0$, depending on $\varepsilon$, $\alpha$, and $\beta$, such that
	\[
	\mathcal{L}(x^{n+1}, v_{n+1}) 
	\le \mathcal{L}(x^n, v_n) - C'_{\varepsilon,\alpha,\beta} \, \mathcal{L}(x^n, v_n).
	\]
	Since $\mathcal{L}(x,v) \ge 0$ and vanishes only on $\mathcal{S}\times\{0\}$, the above inequality implies $C'_{\varepsilon,\alpha,\beta}\in(0,1)$. Defining the contraction factor as
	\[
	\rho_{\varepsilon,\alpha,\beta} := 1 - C'_{\varepsilon,\alpha,\beta} \in (0,1),
	\]
	we obtain
	\[
	0 \le \mathcal{L}(x^{n+1}, v_{n+1}) \le \rho_{\varepsilon,\alpha,\beta} \, \mathcal{L}(x^n, v_n).
	\]
	
	It remains to verify that the entire sequence $\{x^n\}_{n\in\mathbb{N}}$ remains in $\mathcal{B}_{\sigma_1}(x_*)$, provided $x^0, x^1$ are sufficiently close to $x_*$.
	To this end, choose $\sigma \in (0, \sigma_1)$ such that $x^0, x^1 \in \mathcal{B}_{\sigma}(x_*)$.	Then the claim holds for $n = 0, 1$.
	
	We now proceed by induction. Assume that $x^k \in \mathcal{B}_{\sigma_1}(x_*)$ for all $k \le n$ with $n \ge 1$.  
	Then all local estimates derived above—including the Lyapunov decay estimate—are valid up to step $n$, yielding
	\[
	\mathcal{L}(x^{n+1}, v_{n+1}) \le \rho \, \mathcal{L}(x^n, v_n) \le \cdots \le \rho^{n} \mathcal{L}(x^1, v_1),
	\]
	for some $\rho \in (0,1)$.
	Using this geometric decay and the local Lipschitz continuity of the gradient, we bound the deviation of $x^{n+1}$ from $x_*$ as follows:
	\begin{align*}
		\|x^{n+1} - x_*\|
		&\le \|x^0 - x_*\| + \sum_{j=0}^{n} \|x^{j+1} - x^j\|\\
		&= \|x^0 - x_*\| + \sum_{j=0}^{n} \|v_{j+1}\|\\
		&\le \|x^0 - x_*\| + C \sum_{j=0}^{n} \sqrt{\mathcal{L}(x^{j+1}, v_{j+1})} \\
		&\le \sigma + C \sqrt{\mathcal{L}(x^1, v_1)} \sum_{j=0}^{n} \rho^{j/2} \\
		&\le \sigma + C \sqrt{\mathcal{L}(x^1, v_1)} \cdot \frac{1}{1 - \sqrt{\rho}} \nonumber\\
		&\le \sigma + C \big(\|\nabla f(x^1)\|+\|x^1-x_*\|+\|x^0-x_*\|\big)\\
		&\le \sigma + C \sigma \\
		&\le C \sigma,
	\end{align*}
	where $C > 1$ is a uniform constant independent of $n$.
	
	Choosing $\sigma < \sigma_1 / C$, we obtain $x^{n+1} \in \mathcal{B}_{\sigma_1}(x_*)$.  This completes the induction step and confirms that the entire sequence $\{x^n\}_{n\in\mathbb{N}} \subseteq \mathcal{B}_{\sigma_1}(x_*)$, thereby validating all local estimates used above.
\end{proof}
\subsection{Proof of Theorem \ref{Opt-convergence}}
\begin{proof}[Proof of {\bf Theorem \ref{Opt-convergence}}]  By \textbf{Lemma~\ref{Lyapunov-Inf-Sup}} and \textbf{Theorem~\ref{Exp-Lyapunov-Stab}}, 
	for every sufficiently small $\varepsilon > 0$, there exists $\sigma \in (0, \sigma_1)$ such that, 
	for any $x^0, x^1 \in \mathcal{B}_{\sigma}(x_*)$, the NAG iterates satisfy $\{x^n\}_{n\in\mathbb{N}} \subset \mathcal{B}_{\sigma_1}(x_*)$ and hence the difference sequence $\{v_n\}_{n\in\mathbb{N}}$ converge linearly to $0$, 
	provided $(\alpha, \beta)$ satisfies
	\[
	\alpha \in \left(0, \frac{2(1 + \beta)}{(2\beta + 1)(L + \varepsilon)}\right), \quad \beta \in (0,1).
	\]
	Specifically, there exists a constant $C_{\varepsilon} > 0$ such that  
	\[
	\|x^{n+1} - x^n\| \le C \sqrt{\mathcal{L}(x^1,v_1)} \rho^{n},
	\]
	where $\rho \in (0,1)$ depends on $\varepsilon,\alpha$, and $\beta$.
	
	This geometric decay of successive differences implies that the iterates $\{x^n\}_{n \in \mathbb{N}}$ themselves converge linearly to the local minimizer $x_*'$. Indeed, for any integers $m > n \geq 1$, we have
	\begin{align*}
		\|x^m - x^n\| 
		&\leq \sum_{k=n+1}^{m} \|x^k - x^{k-1}\| 
		\leq \sum_{k=n+1}^{\infty} \|x^k - x^{k-1}\| \\
		&\leq C \sqrt{\mathcal{L}(x^1, v_1)} \sum_{k=n+1}^{\infty} \rho^{\,k}
		= C \sqrt{\mathcal{L}(x^1, v_1)} \cdot \frac{\rho^{\,n+1}}{1 - \rho}.
	\end{align*}
	Hence $\{x^n\}_{n \in \mathbb{N}}$ is a Cauchy sequence in $\mathbb{R}^d$, and therefore converges to some limit point $x_*' \in \mathcal{B}_{\sigma_1}(x_*)$. Moreover, passing to the limit in the above estimate yields the linear convergence rate
	\[
	\|x^n-x_*'\|\le C \sqrt{\mathcal{L}(x^1, v_1)}\rho^n\le C \big(\|x^1-x_*'\|+\|x^0-x_*'\|\big)\rho^n.
	\]
	
	The optimal local linear convergence rate is established below. A local linearization of the NAG iteration at $x_*'$ yields
	\begin{align*}
		x^{n+1} - x_*'
		&= x^n - x_*' +\beta(x^{n}-x^{n-1})- \alpha\nabla^2f(x_*')[x^n - x_*'+\beta(x^{n}-x^{n-1})]\\
		& + o(\|x^n - x_*'\|+\|x^{n}-x^{n-1}\|) \\
		&= (I - P_{x_*'})(x^n - x_*'+\beta(x^n-x^{n-1})) + (1+\beta)\mathcal{G}_\alpha(x_*')P_{x_*'}(x^n - x_*')\\ &-\beta\mathcal{G}_\alpha(x_*')P_{x_*'}(x^{n-1} - x_*')+ o(\|x^n - x_*'\|+\|x^n-x^{n-1}\|).
	\end{align*}
	From this expansion, the following decoupled asymptotic relations follow:
	\begin{align*}
		(I - P_{x_*'})(x^{n+1} - x^n) &=\beta(I - P_{x_*'})(x^{n} - x^{n-1})+ o(\|x^n - x_*'\|+\|x^n-x^{n-1}\|),\\
		P_{x_*'}(x^{n+1} - x_*') &= (1+\beta)\mathcal{G}_\alpha(x_*')P_{x_*'}(x^n - x_*')-\beta\mathcal{G}_\alpha(x_*')P_{x_*'}(x^{n-1} - x_*')\\
		&\hspace{5cm}+ o(\|x^n - x_*'\|+\|x^n-x^{n-1}\|).	
	\end{align*}
	Telescopic summation of the first relation gives
	\begin{align}\label{telescoping-identity}
		(I - P_{x_*'})(x^n - x_*')-\beta(I - P_{x_*'})(x^{n-1} - x_*')
		&= \sum_{k=n}^\infty (I - P_{x_*'})(x^k - x^{k+1}-\beta(x^{k-1}-x^k))
	\end{align}
	with $\big\|(I - P_{x_*'})(x^k - x_*^{k+1})-\beta(I - P_{x_*'})(x^{k-1} - x_*^k)\big\|
	\to 0$ as $(\| x^k - x_*' \|+\|x^k-x^{k-1}\|)\to0$. In particular, for every $\sigma>0$ there exists $\delta_\sigma\to0^+$ as
	$\sigma\to0$ such that
	\begin{align*}
		\big\| (I - P_{x_*'})(x^k - x^{k+1}-\beta(x^{k-1}-x^k)) \big\| 
		\le \delta_\sigma (\| x^k - x_*' \|+\|x^k-x^{k-1}\|)
	\end{align*}
	whenever 
	\[
	\| x^k - x_*' \|+\| x^k - x^{k-1} \|\le \sigma.
	\]
	Hence, from the telescoping identity \eqref{telescoping-identity} and for $n$ large enough so that 
	\[
	\| x^k - x_*' \|+\|x^k-x^{k-1}\|\le\sigma,\quad\text{ for all}\; k\ge n,
	\] 
	it follows that
	\begin{align}
		\label{estimate-telescoping-identity}
		\bigl\|(I - P_{x_*'})(x^n - x_*')-\beta(I - P_{x_*'})(x^{n-1} - x_*')\bigr\|
		\le \delta_\sigma \sum_{k=n}^\infty \bigl(\| x^k - x_*' \|+\| x^k - x^{k-1} \|\bigr).
	\end{align}
	Given the linear convergence of $\{x^n\}_{n\in\mathbb{N}}$ and $\{x^n-x^{n-1}\}_{n\in\mathbb{N}}$, there exists $\rho \in (0,1)$ such that
	\[
	\|x^k - x_*'\|+\|x^k - x^{k-1}\| \le C \rho^{k-n}\left(\|x^n - x_*'\|+\|x^n - x^{n-1}\|\right)\quad\text{ for all}\;\; k\ge n,
	\]
	Consequently,
	\begin{align*}
		{
			\bigl\|(I - P_{x_*'})(x^n - x_*')-\beta(I - P_{x_*'})(x^{n-1} - x_*')\bigr\|
			\overset{\eqref{estimate-telescoping-identity}}{\le}  }
		\delta_{\sigma} \sum_{k=n}^\infty \bigl(\|x^k - x_*'\|+\|x^k - x^{k-1}\|\bigr)&\\
		\le C \delta_{\sigma} \bigl(\|x^n - x_*'\|+\|x^n - x^{n-1}\|\bigr)&,
	\end{align*}
	where $ \delta_{\sigma}\to 0^+$ as $\sigma\to0^+$. It follows that
	\begin{align*}
		&(I - P_{x_*'})(x^n - x_*')-\beta(I - P_{x_*'})(x^{n-1} - x_*')
		= o(\|x^n - x_*'\|+\|x^n-x^{n-1}\|),\\
		\Longrightarrow\;&(I - P_{x_*'})(x^n - x_*')=\frac{\beta}{1-\beta}(I - P_{x_*'})(x^{n-1} - x^n)+o(\|x^n - x_*'\|+\|x^n-x^{n-1}\|).
	\end{align*}
	Using the identities
	\begin{align*}
		x^n-x_*'&=P_{x_*'}(x^n - x_*')+(I - P_{x_*'})(x^n - x_*'),\\
		x^n-x^{n-1}&=P_{x_*'}(x^n - x_*')- P_{x_*'}(x^{n-1} - x_*')+(I - P_{x_*'})(x^n - x^{n-1}),
	\end{align*}
	and applying the triangle inequality, we obtain
	\begin{align*}
		&o\; (\|x^n - x_*'\|+\|x^n-x^{n-1}\|)\\
		=&o\left(\big\|P_{x_*'}(x^n - x_*')\big\|+\big\|P_{x_*'}(x^{n-1} - x_*')\big\|+\big\|(I-P_{x_*'})(x^n - x^{n-1})\big\|\right).	
	\end{align*}
	Define the augmented state vector
 \[
     X^{n}:=\begin{pmatrix}
			P_{x_*'}(x^{n} - x_*') \\
			P_{x_*'}(x^{n-1} - x_*') \\
			(I - P_{x_*'})(x^{n} - x^{n-1})
		\end{pmatrix}.
 \]
 We can then rewrite the iteration using the matrix $G_{\alpha,\beta}(x_*')$ from  \eqref{G-block-oper} as
	\begin{align*}
		X^{n+1}
		=
		G_{\alpha,\beta}(x_*')
		X^{n}
		+
		o\big(\|X^n\|\big),
	\end{align*}
 where 
 \[
 \|X^n\|=\sqrt{\|P_{x_*'}(x^{n} - x_*')\|^2+\|P_{x_*'}(x^{n-1} - x_*')\|^2+\|(I - P_{x_*'})(x^{n} - x^{n-1})\|^2}.
 \]
	By \textbf{Lemma~\ref{G-spectrum}} and \textbf{Lemma~\ref{Nonlinear-Iteration}}, we immediately obtain the following local linear convergence rate:
	\begin{align*}
		\left\|
		X^{n+1}
		\right\|
		\le
		C_{\varepsilon}\left\|
		X^{n}
		\right\|
		\left( \rho\bigl(G_{\alpha,\beta}(x_*')|_{\mathcal{N}_{x_*'}}\bigr) + \varepsilon \right)^n,
		\quad \forall\, n \ge 1,
	\end{align*}
	Together with the inequality
	\begin{align*}
		\|x^n-x^{n-1}\|&\le \big\|P_{x_*'}(x^n - x_*')\big\|+\big\| P_{x_*'}(x^{n-1} - x_*')\big\|+\big\|(I - P_{x_*'})(x^n - x^{n-1})\big\|\\
  &=\|X^n\|,
	\end{align*} 
	and the geometric decay estimate of $\{X^n\}_{n\in\mathbb{N}}$, we deduce that for all parameters  $ (\alpha,\beta) $  satisfying
	\begin{align*}
		\alpha\in\left(0,\frac{2(\beta+1)}{(L+\varepsilon)(2\beta+1)}\right),\; \beta\in[0,1),
	\end{align*}
	the following locally linear convergence rate holds
	\[
	\| x^n - x^{n-1} \|
	\le C_{\varepsilon}\big(\| x^{1} - x_*' \|+\| x^{0} - x_*' \|\big)\left( \rho\bigl(G_{\alpha,\beta}(x_*')|_{\mathcal{N}_{x_*'}}\bigr) + \varepsilon \right)^n,
	\quad \forall\; n \ge 1.
	\]
   This suggests that
   \begin{align*}
    \| x^n - x_*' \|&=\sum_{k=n+1}^{\infty}\|x^k-x^{k-1}\|\\
    &\le C_{\varepsilon}\big(\| x^{1} - x_*' \|+\| x^{0} - x_*' \|\big)\left( \rho\bigl(G_{\alpha,\beta}(x_*')|_{\mathcal{N}_{x_*'}}\bigr) + \varepsilon \right)^n,
	\quad \forall\; n \ge 1.
   \end{align*}
	In particular, the optimal contraction factor is attained when 
	\begin{align*}
		\alpha_{\mathrm{opt}} = \frac{4}{3L + \mu}, \qquad 
		\beta_{\mathrm{opt}} = \frac{\sqrt{3L + \mu} - 2\sqrt{\mu}}{\sqrt{3L + \mu} + 2\sqrt{\mu}},
	\end{align*}
	and the minimal value is given by
	\begin{align*}
		\rho_{\mathrm{opt}}\bigl( G_{\alpha,\beta}(x_*')|_{\mathcal{N}_{x_*'}} \bigr) 
		= \frac{\sqrt{3L + \mu} - 2\sqrt{\mu}}{\sqrt{3L + \mu}}.
	\end{align*}
\end{proof}
\subsection{Proof of Theorem \ref{Opt-convergence-flow}}
\begin{proof}[Proof of {\bf Theorem \ref{Opt-convergence-flow}}]
	The proof follows the same strategy as that of Theorem~\ref{Opt-convergence}: we first establish local exponential convergence of the trajectories via a Lyapunov function argument. Specifically, for any sufficiently small $\varepsilon > 0$, we employ the Lyapunov function constructed in \cite[Theorem~4]{2017Polyak}, namely
	\[
	\widetilde{\mathcal{L}}(x,v) := f(x) - f(x_*) + \frac{\gamma}{\gamma^2 + 2(L+\varepsilon)} \langle \nabla f(x), v \rangle + \frac{L+\varepsilon}{\gamma^2 + 2(L+\varepsilon)} \|v\|^2.
	\]
	Since $f \in C^2$, for any given $\varepsilon > 0$, there exists a sufficiently small open neighborhood $U_1$ of $x_*$ such that
	\[
	\nabla^2 f(x)[v,v] \le (L + \varepsilon) \|v\|^2, \quad \text{for all } x \in U_1 \text{ and all } v \in \mathbb{R}^d.
	\]

	Assuming that the trajectory remains in $U_1$ for all $t \ge 0$, the argument in \cite[Theorem~4]{2017Polyak} directly yields the exponential decay estimate
	\begin{align*}
		\big\|\nabla f(x(t))\big\|^2+\|\dot{x}(t)\|^2\approx\widetilde{\mathcal{L}}(x(t), \dot{x}(t)) \le e^{-\rho t} \, \widetilde{\mathcal{L}}(x(0), \dot{x}(0)), \quad \forall\, t \ge 0,
	\end{align*}
	for some constant $\rho > 0$ depending on $\varepsilon,\gamma$.
	Finally, by choosing the initial state $(x(0), \dot{x}(0))$ in a sufficiently small open neighborhood $\mathcal{U}\subset\mathbb{R}^d \times \mathbb{R}^d$ of $(x_*, 0)$, one can guarantee that $x(t) \in U_1$ for all $t \geq 0$. This step follows verbatim from the proof of {\bf Theorem~\ref{Exp-Lyapunov-Stab}} and is therefore omitted here. This implies that for all $t' \ge t \ge 0$,
	\begin{align}\label{dotx-x}
		\nonumber\|x(t') - x(t)\| 
		&= \left\| \int_t^{t'} \dot{x}(s) \, ds \right\| 
		\le \int_t^{t'} \|\dot{x}(s)\| \, ds \\
		&\le \int_t^{t'} C e^{-\rho s} \sqrt{\widetilde{\mathcal{L}}(x(0), \dot{x}(0))} \, ds 
		\le C e^{-\rho t} \sqrt{\widetilde{\mathcal{L}}(x(0), \dot{x}(0))},
	\end{align}
	where we used the exponential decay of $\|\dot{x}(t)\|$. By the Cauchy criterion, $\{x(t)\}_{t \ge 0}$ converges to some limit point $x_*' \in U_1$ as $t \to \infty$, and the convergence is locally exponential, i.e.,
	\begin{align*}
		\|x(t)-x_*'\|\le Ce^{-\rho t}\sqrt{\widetilde{\mathcal{L}}(x(0), \dot{x}(0))}\le Ce^{-\rho t}\big(\|x(0)-x_*'\|+\|\dot{x}(0)\|\big)
	\end{align*}
	
	Next, we linearize the dynamics~\eqref{Accelerated-flow} around the limit point $x_*'$. Since $f \in C^2$, we have
	\[
	\ddot{x}(t) + \gamma \dot{x}(t) = -\nabla^2 f(x_*') (x(t) - x_*') + o(\|x(t) - x_*'\|), \quad \text{as } t \to \infty.
	\]
	Decomposing the above equation yields the asymptotic relations:
	\begin{align*}
		(I - P_{x_*'}) \ddot{x}(t) &= -\gamma (I - P_{x_*'}) \dot{x}(t) + o(\|x(t) - x_*'\|), \\
		P_{x_*'} \ddot{x}(t) + \gamma P_{x_*'} \dot{x}(t) &= -\nabla^2 f(x_*') P_{x_*'} (x(t) - x_*') + o(\|x(t) - x_*'\|).
	\end{align*}
	Denote the remainder term by $R(t) := o(\|x(t) - x_*'\|)$ as $t \to \infty$. 
	Integrating the first equation of \eqref{Accelerated-flow} from $t$ to $t'$ yields
	\begin{align*}
		(I - P_{x_*'})(\dot{x}(t') - \dot{x}(t)) 
		= -\gamma (I - P_{x_*'})(x(t') - x(t)) + \int_t^{t'} R(s) \, ds.
	\end{align*}
	Taking the limit as $t' \to \infty$, and using the facts that
	\[
	\lim_{t' \to \infty} x(t') = x_*', \qquad \lim_{t' \to \infty} \dot{x}(t') = 0,
	\]
	we obtain
	\begin{align*}
		(I - P_{x_*'}) \dot{x}(t) 
		= -\gamma (I - P_{x_*'})(x(t) - x_*') - \int_t^{\infty} R(s) \, ds.
	\end{align*}
	Moreover, since $\|R(s)\| \le \sigma_s \|x(s) - x_*'\|$ with $\sigma_s \to 0$ as $s \to \infty$, and 
	\[
	\|x(s)-x_*'\|\le  Ce^{-\rho (s-t)}\big(\|x(t)-x_*'\|+\|\dot{x}(t)\|\big)\quad\text{for all } s \ge t \text{ with } t \text{ sufficiently large},
	\]
	it follows that
	\begin{align*}
		\left\| \int_t^{\infty} R(s) \, ds \right\| 
		&\le \int_t^{\infty} \|R(s)\| \, ds 
		\le \int_t^{\infty} \sigma_s \|x(s) - x_*'\| \, ds \\
		&\le C \big(\|x(t)-x_*'\|+\|\dot{x}(t)\|\big)\left( \sup_{s \ge t} \sigma_s \right) \int_t^{\infty} e^{-\rho (s-t)} \, ds \\
		&= \frac{C}{\rho} \big(\|x(t)-x_*'\|+\|\dot{x}(t)\|\big) \sup_{s \ge t} \sigma_s \\
		&= o\big(\|x(t)-x_*'\|+\|\dot{x}(t)\|\big) \quad \text{as } t \to \infty.
	\end{align*}
	Hence,
	\begin{equation}\label{eq:projected-relation}
		(I - P_{x_*'})(x(t) - x_*')
		= -\frac{1}{\gamma} (I - P_{x_*'}) \dot{x}(t)  + o\big(\|x(t)-x_*'\|+\|\dot{x}(t)\|\big).
	\end{equation}
	Now decompose the state using the orthogonal projection $P_{x_*'}$. We have
	\begin{align*}
		x(t)-x_*' &= P_{x_*'}(x(t) - x_*') + (I - P_{x_*'})(x(t) - x_*'), \\
		\dot{x}(t) &= P_{x_*'}\dot{x}(t) + (I - P_{x_*'})\dot{x}(t).
	\end{align*}
	Combining these with \eqref{eq:projected-relation} and applying the triangle inequality, the asymptotic relation
	\[
	o\big(\|x(t) - x_*'\|+\|\dot{x}(t)\|\big)
	= o\left( \big\|P_{x_*'}(x(t) - x_*')\big\| + \big\|P_{x_*'}\dot{x}(t)\big\| + \big\|(I - P_{x_*'})\dot{x}(t)\big\| \right)
	\]
	holds as $t \to \infty$.
	
	Define the augmented state vector
	\[
	X(t) := 
	\begin{pmatrix}
		P_{x_*'}(x(t) - x_*') \\
		P_{x_*'}\dot{x}(t) \\
		(I - P_{x_*'})\dot{x}(t)
	\end{pmatrix}.
	\]
	Then, by decomposing the accelerated flow \eqref{Accelerated-flow} around the equilibrium $(x_*, 0)$, the dynamics of $X(t)$ are governed asymptotically by the linear system
	\[
	\frac{\text{d}}{\text{d}t} X(t) = AX(t) + o(\|X(t)\|)
	\]
	with 
	\[
	A:=\begin{pmatrix}
		\mathbf{0} & I & \mathbf{0} \\
		-\nabla^2 f(x_*') & -\gamma I & \mathbf{0} \\
		\mathbf{0} & \mathbf{0} & -\gamma I
	\end{pmatrix}
	\]
	Observe that the spectrum of the matrix $A$ restricted to the invariant subspace  
	\[
	\mathcal{N}_{x_*'}=N_{x_*'}\mathcal{S}_{U} \times N_{x_*'}\mathcal{S}_{U} \times T_{x_*'}\mathcal{S}_{U}\subset\mathbb{R}^d\times\mathbb{R}^d\times\mathbb{R}^d
	\]  
	is given by  
	\[
	\operatorname{Spec}(A|_{\mathcal{N}_{x_*'}}) = \left\{ \frac{-\gamma \pm \sqrt{\gamma^2 - 4\gamma\lambda_i}}{2} \;:\; \lambda_i \in [\mu, L] \right\} \cup \{-\gamma\}.
	\]  
	Consequently, the decay rate is determined by the spectral bound  
	\[
	-\rho_{\gamma} = -\frac{\gamma - \sqrt{\max\{0,\, \gamma^2 - 4\mu\}}}{2}.
	\]
	Standard results (see, e.g., \cite[Lemma 7]{1964P} or \cite[Lemma 4.5]{2026HBOpt}) imply that for any $\varepsilon > 0$, there exists a constant $C_{\varepsilon} > 0$ such that  
	\[
	\|X(t)\| \le C_{\varepsilon} e^{-(\rho_{\gamma} - \varepsilon)t} \|X(0)\|,\quad\forall \; t\ge0.
	\]
	Combining this exponential decay estimate with the inequality  
	\[
	\|\dot{x}(t)\| \le \big\|P_{x_*'}\dot{x}(t)\big\| + \big\|(I - P_{x_*'})\dot{x}(t)\big\|\le \|X(t)\|,
	\]  
	and repeating the argument used in \eqref{dotx-x}, we obtain the following quantitative exponential convergence rate:  
	\[
	\|x(t) - x_*'\| \le C_{\varepsilon} e^{-(\rho_{\gamma}-\varepsilon) t} \big( \|x(0) - x_*'\| + \|\dot{x}(0)\| \big),\quad\forall \; t\ge0.
	\]
	
	In particular, when the damping parameter is chosen as $\gamma = 2\sqrt{\mu}$, the optimal exponential decay rate is achieved:  
	\[
	\rho_{\mathrm{opt}}  = \sqrt{\mu}.
	\]
\end{proof}
\section{Numerical experiments}\label{Sec4}
In this section, we will introduce a series of numerical experiments to verify our theoretical results.
\subsection{Test function and its theoretical properties}
To numerically validate the optimal local linear convergence of NAG established in {\bf Theorem~\ref{Opt-convergence}}, we consider a class of nonconvex objective functions that are smooth, possess a manifold of minimizers, and satisfy the local PL condition near the minimizer set by the local Morse--Bott condition.

Specifically, let $u \in \mathbb{R}^k$, $v \in \mathbb{R}^m$, and define  
\[
f(u, v) := \frac{1}{2} u^\top \operatorname{diag}(\lambda_1, \lambda_2, \dots, \lambda_k) u + \frac{1}{8} \lambda_{k+1} \big( \|v\|^2 - 1 \big)^2,
\]
where the parameters satisfy  
\[
0<\lambda_1\le\lambda_2\le\cdots\lambda_{i}\le\cdots\lambda_k\le\lambda_{k+1} \quad \text{for all } i = 1, \dots, k+1.
\]

We now verify the key properties of $f$.

\medskip

\noindent\textbf{(i) Nonconvexity.}

The Hessian of $f$ is given by
\[
\nabla^2 f(u,v) =
\begin{pmatrix}
	\operatorname{diag}(\lambda_1,\dots,\lambda_k) & 0 \\
	0 & \lambda_{n+1}\left( \tfrac{1}{2}(\|v\|^2 - 1) I_m + v v^\top \right)
\end{pmatrix}.
\]
At the point $(u,v) = (0,0)$, the lower-right block reduces to $-\tfrac{\lambda_{k+1}}{2} I_m$, which is negative definite. Hence $\nabla^2 f(0,0)$ has negative eigenvalues, and $f$ is nonconvex.

\medskip

\noindent\textbf{(ii) Local non-strong convexity and Morse--Bott structure.}

The global minimizer set of $f$ is the smooth submanifold
\[
\mathcal{M} := \left\{ (u, v) \in \mathbb{R}^k \times \mathbb{R}^m : u = 0,\; \|v\|^2 = 1 \right\}.
\]
The gradient vanishes precisely on $\mathcal{M}$ and at the isolated saddle point $(0,0)$. Thus, $f$ has no other local minima, and it is not strongly convex in any neighborhood of a minimizer since the minimizers are not isolated.

At any $(0, v_*) \in \mathcal{M}$ (so $\|v_*\| = 1$), the Hessian simplifies to
\[
\nabla^2 f(0, v_*) =
\begin{pmatrix}
	\operatorname{diag}(\lambda_1,\dots,\lambda_k) & 0 \\
	0 & \lambda_{k+1} v_* v_*^\top
\end{pmatrix}.
\]
Its spectrum consists of:
\begin{itemize}
	\item positive eigenvalues $\lambda_1, \dots, \lambda_k$ from the $u$-block;
	\item one positive eigenvalue $\lambda_{k+1}$ in the normal direction to the sphere $\|v\|=1$;
	\item zero eigenvalues (multiplicity $m-1$) along the tangent directions of the sphere.
\end{itemize}
Hence, $\nabla^2 f(0, v_*)$ is positive semidefinite with constant rank, and its kernel coincides with the tangent space of $\mathcal{M}$. This confirms that $f$ is a Morse--Bott function. Moreover, since the local PL condition is equivalent to the local Morse--Bott condition for smooth functions, $f$ naturally satisfies the local PL condition near $\mathcal{M}$.

Finally, the smallest positive eigenvalue and the largest eigenvalue of $\nabla^2 f$ on $\mathcal{M}$ are
\begin{align}\label{Test-L-mu}
	\mu := \min_{i=1,2,\cdots,k+1}\{\lambda_i\}=\lambda_1, \qquad 
	L := \max_{i=1,2,\cdots,k+1}\{\lambda_i\}=\lambda_{k+1}.
\end{align}

To sum up, $f$ satisfies all structural assumptions required by our theory.

\subsection{Numerical results}
We now instantiate the test function $f$ with the following concrete parameters:
\[
k = 10^{2}, 10^{3}, 10^{4}, 10^{5}, \quad m = 10, \quad \lambda_i = i \quad \text{for } i = 1, 2, \dots, k+1.
\]
Consequently, the curvature constants on the minimizer manifold $\mathcal{M}$ are
\[
\mu = \lambda_1 = 1, \qquad L = \lambda_{k+1} = k+1,
\]
yielding a condition number $\kappa = L / \mu = 1/(k+1)$.

The NAG iterates are initialized at a point $(u^0, v^0)$ in a neighborhood of $\mathcal{M}$:
\[
u^0 = \varepsilon_u \cdot \xi_u, \quad v^0 = (1 + \varepsilon_v) \cdot \frac{\xi_v}{\|\xi_v\|},
\]
where $\xi_u \in \mathbb{R}^{k}$ and $\xi_v \in \mathbb{R}^{m}$ are standard Gaussian random vectors, and we set $\varepsilon_u = \varepsilon_v = 10^{-4}$ to ensure the initial point lies sufficiently close to $\mathcal{M}$ for local convergence to manifest.

We apply NAG with the optimal parameters, namely
\begin{align*}
	\alpha_{\mathrm{opt}} = \frac{4}{3L + \mu}, \qquad 
	\beta_{\mathrm{opt}} = \frac{\sqrt{3L + \mu} - 2\sqrt{\mu}}{\sqrt{3L + \mu} + 2\sqrt{\mu}},
\end{align*}
which yield the asymptotically optimal accelerated convergence rate
\begin{align*}
	\rho_{\mathrm{opt}} = \frac{\sqrt{3L + \mu} - 2\sqrt{\mu}}{\sqrt{3L + \mu}}.
\end{align*}
 These choices are consistent with the local analysis in our theorem.
The algorithm is terminated when 
\[
 \big|f(u^n, v^n)\big| < 10^{-20}
\]
is reached. 

Figure~\ref{fig1} displays the evolution of the momentum norm $\|x^{n+1} - x^n\|$ versus the iteration count $n$, plotted on a logarithmic scale. 
We emphasize that, the momentum $\{x^{n+1}-x^n\}_{n\in\mathbb{N}}$ converges to zero at a $\rho$-linear rate if and only if the underlying iterates $\{x^n\}_{n\in\mathbb{N}}$ themselves converge at the same $\rho$-linear rate. 
Consequently, monitoring the decay of $\|x^{n+1} - x^n\|$ provides a reliable proxy for assessing the convergence behavior of the sequence.
\begin{figure}[ht]	
	\centering{	
		$k=10^{2}$\;\includegraphics[height=5cm,width=6cm]{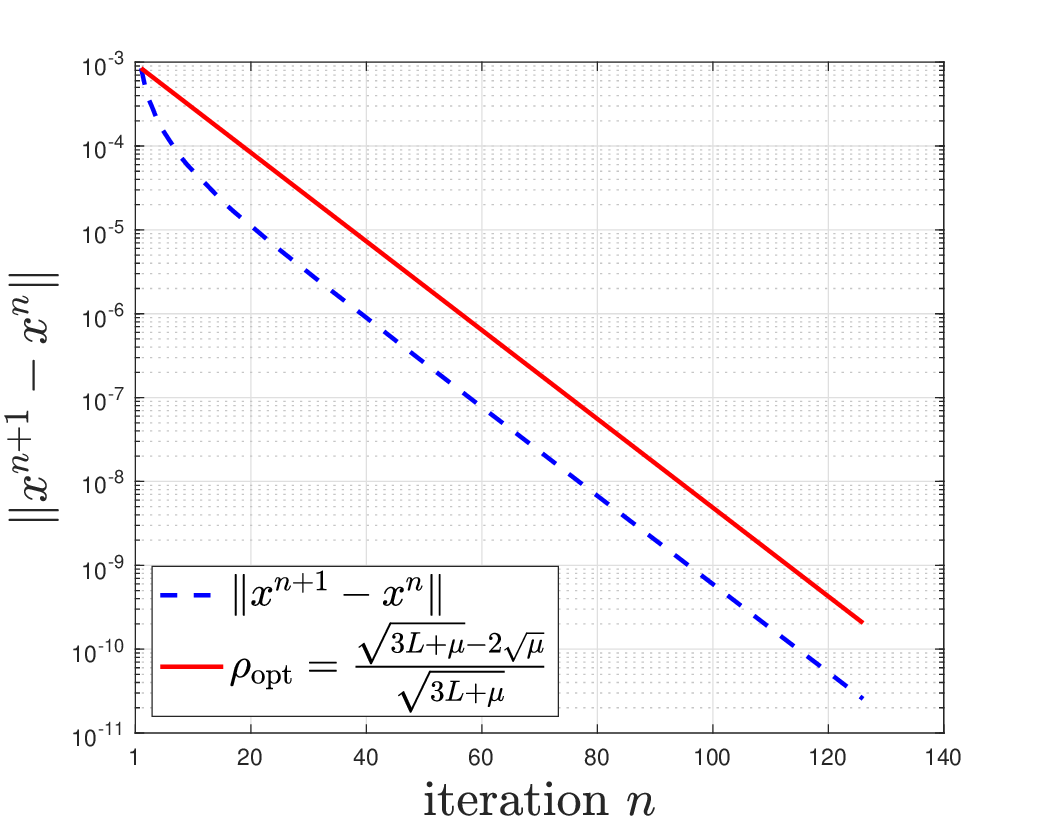} \quad
		$k=10^{3}$\;\includegraphics[height=5cm,width=6cm]{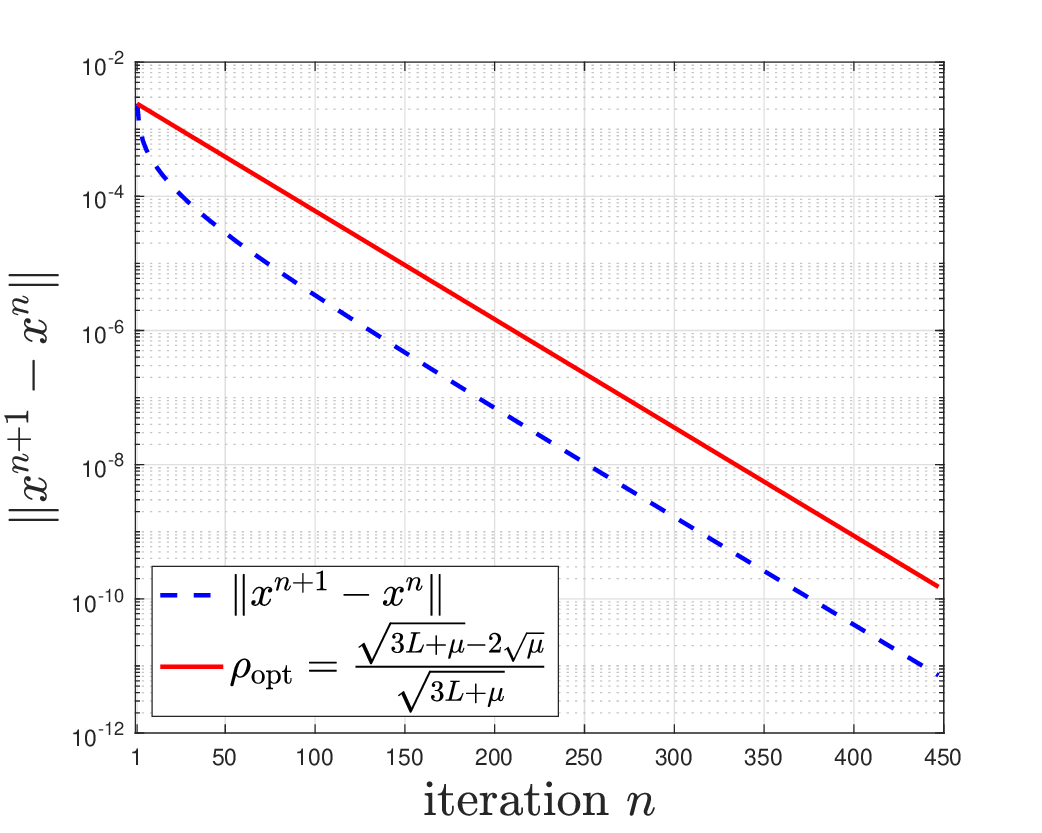}\\[1em]
		$k=10^{4}$\;\includegraphics[height=5cm,width=6cm]{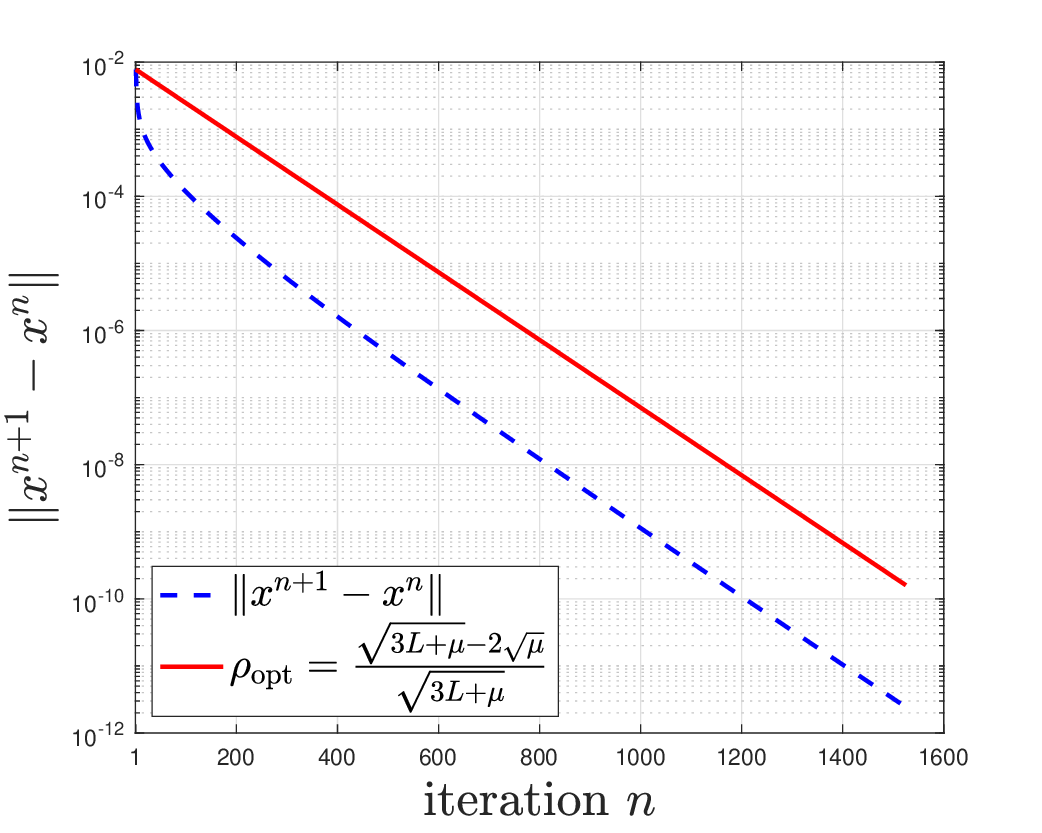}\quad
		$k=10^{5}$\;\includegraphics[height=5cm,width=6cm]{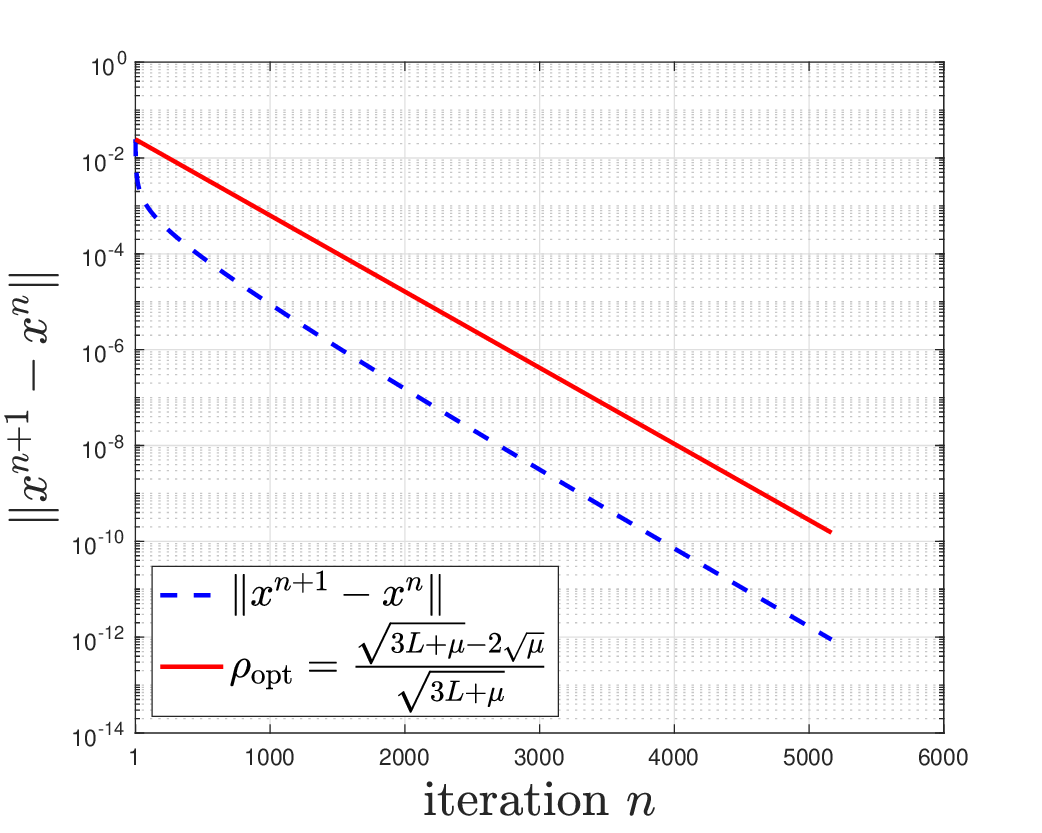}		
	}
	\caption{Convergence behavior of NAG for $k = 10^{2}$, $10^{3}$, $10^{4}$, and $10^{5}$. 
		In each subplot, the red solid line represents the theoretical optimal convergence rate $\rho_{\mathrm{opt}}$, 
		while the blue dashed line shows the actual error $\|x^{n+1} - x^{n}\|$.}
	\label{fig1}
\end{figure}

 As shown in the figure, the blue dashed curves—representing the empirical error $\|x^{n+1} - x^n\|$ across different problem scales ($k = 10^2, 10^3, 10^4, 10^5$)—asymptotically align with the red solid line, which corresponds to the theoretical optimal linear rate $\rho_{\mathrm{opt}}$. 
This alignment becomes increasingly pronounced in the later iterations, indicating that the algorithm enters its local linear convergence regime. In accordance with {\bf Theorem~\ref{Opt-convergence}}, both the theoretical prediction and numerical observation confirm a local $\rho_{\mathrm{opt}}$-linear decay, where the asymptotic higher-order terms (e.g., $\varepsilon$-level perturbations) have been omitted for clarity. 
Notably, the observed slope in the semi-log plot closely matches the analytically derived rate
\[
\rho_{\mathrm{opt}} = \frac{\sqrt{3L + \mu} - 2\sqrt{\mu}}{\sqrt{3L + \mu}},
\]
which is known to be the optimal local convergence factor for NAG in strongly convex settings. 

It is worth emphasizing that our experimental setting deliberately excludes the classical cases in which the local PL condition follows from (local) strong convexity. Instead, we consider genuinely nonconvex scenarios that are not locally strongly convex, yet still satisfy a local PL condition near the solution. This provides strong numerical evidence that NAG retains its optimal local acceleration rate beyond the traditional strongly convex framework—specifically, in regions where the PL inequality holds without requiring strong convexity. Consequently, the results corroborate both the robustness and sharpness of the guarantees established in Theorem~\ref{Opt-convergence}.

\section{Conclusion}\label{Sec5}

In this paper, we establish the optimal local linear convergence rate of Nesterov's Accelerated Gradient method (NAG) for solving locally non-strongly convex unconstrained optimization problems, where the objective function is $C^2$ and satisfies the local Polyak--\L{}ojasiewicz (PL) condition in a neighborhood of a local minimizer.  
This result naturally demonstrates that NAG retains its acceleration capability even in the absence of strong convexity within a local nonconvex regime.  Notably, under the assumption of the local PL condition, our analysis requires only a mild strengthening of the standard regularity assumption: we replace the $C^{1,1}$ condition (i.e., locally Lipschitz continuous gradient) commonly used with $C^2$ smoothness. This modest requirement renders our framework relatively mild and widely applicable. Finally, our numerical experiments are conducted on instances that deliberately exclude trivial cases such as locally strongly convex objectives. Specifically, we consider genuinely nonconvex and non-locally-strongly-convex functions that still satisfy the local PL condition near the solution. The close match between theory and practice in these representative scenarios strongly corroborates the validity, generality, and sharpness of our convergence guarantees.

\setcounter{equation}{0}


\begin{thebibliography}{10}
	
	\bibitem{2000Anitescu}
	Anitescu, M.: Degenerate nonlinear programming with a quadratic growth condition. SIAM J. Optim. \textbf{10}(4), 1116--1135 (2000)
	
	\bibitem{2024HBM}
	Aujol, J.-F., Dossal, C., Labari\'ere, H., Rondepierre, A.: Heavy ball momentum for non-strongly convex optimization. arXiv preprint arXiv:2403.06930 (2024)
	
	\bibitem{2025Criscitiello}
	Criscitiello, C., Rebjock, Q., Boumal, N.: If a smooth function is globally Polyak--\L{}ojasiewicz and coercive, then it has a unique minimizer. Race to the Bottom (2025) \url{https://www.racetothebottom.xyz/posts/PL-smooth-unique/}
	
	\bibitem{2025Gupta}
	Gupta, K., Wojtowytsch, S.: Nesterov acceleration in benignly non-convex landscapes. In: The International Conference on Learning Representations (2025) 
	
	\bibitem{2025SqC}
	Hermant, J., Aujol, J.-F., Dossal, C., Rondepierre, A.: Study of the behaviour of Nesterov accelerated gradient in a non-convex setting: the strongly quasar convex case. arXiv preprint arXiv:2405.19809 (2024)
	
	\bibitem{2026HJ}
	Hermant, J.: Acceleration for Polyak--\L{}ojasiewicz functions with a gradient aiming
	condition. arXiv preprint arXiv:2602.10022 (2026)
	
	\bibitem{2016Karimi}
	Karimi, H., Nutini, J., Schmidt, M.: Linear convergence of gradient and proximal-gradient methods under the Polyak--\L{}ojasiewicz condition. In: Joint European Conference on Machine Learning and Knowledge Discovery in Databases, pp~795--811. Springer (2016)
	
	\bibitem{2026HBOpt}
	Kassing, S. and Weissmann, S. Polyak’s heavy ball method achieves accelerated local rate of convergence under polyak--lojasiewicz inequality. arXiv preprint arXiv:2410.16849 (2024)
	
	\bibitem{lessard2016}
	L.~Lessard, B.~Recht, and A.~Packard, 
	Analysis and design of optimization algorithms via integral quadratic constraints, 
	SIAM J. Optim., 26 (2016), pp.~57--95.
	
	\bibitem{2014Liu}
	Liu, J., Wright, S., Re, C., Bittorf, V., Sridhar, S.: An asynchronous parallel stochastic coordinate descent algorithm. J. Mach. Learn. Res. \textbf{16}(1), 285--322 (2015)
	
	\bibitem{1963LS}
	\L{}ojasiewicz, S.: A topological property of real analytic subsets. Coll. du CNRS, Les
	\'equations aux d\'eriv\'ees partielles. \textbf{117}(2), 87--89 (1963)
	
	\bibitem{1993Luo}
	Luo, Z.-Q., Tseng, P.: Error bounds and convergence analysis of feasible descent methods: a general approach. Ann. Oper. Res. \textbf{46}(1), 157--178 (1993)
	
	\bibitem{2019Necoara}
	Necoara, I., Nesterov, Y., Glineur, F.: Linear convergence of first order methods for non-strongly convex optimization. Math. Program. \textbf{175}(1), 69--107 (2019)
	
	\bibitem{2025Nejma}
	Nejma, A.B.: Polyak--\L{}ojasiewicz inequality is essentially no more general than strong convexity for functions. arXiv preprint arXiv:2512.05285 (2025)
	
	\bibitem{2018Nestreov}
 	Nesterov, Y.: Lectures on Convex Optimization. Springer, Cham, 2nd ed (2018)
	
	\bibitem{1963Polyak}
	Polyak, B.T.: Gradient methods for the minimisation of functionals. USSR Comput. Math. and Math. Phys. \textbf{3}(4), 864--878 (1963)
	
	\bibitem{1964P}
	Polyak, B.T.: Some methods of speeding up the convergence of iteration methods. USSR Comput. Math. and Math. Phys., \textbf{4}(5), 1--17 (1964)
	
	\bibitem{2017Polyak}
	Polyak, B. T., Shcherbakov, P.: Lyapunov functions: An optimization theory perspective. IFACPapersOnLine, \textbf{50}(1), 7456--7461 (2017)
	
	\bibitem{2025Fast}
	Rebjock, Q., Boumal, N.: Fast convergence to non-isolated minima: four equivalent conditions for \(C^{2}\) functions. Math. Program. \textbf{213}, 151--199 (2025)
	
	\bibitem{2022Wang}
	Wang, J.-K., Lin, C.-H., Wibisono, A., Hu, B.: Provable acceleration of heavy ball beyond quadratics for a class of Polyak--\L{}ojasiewicz functions when the non-convexity is averaged out. In: Proceedings of the 39th International Conference on Machine Learning, pp~22839--22864. PMLR (2022)
	
	\bibitem{2023OnLB}
	Yue, P., Fang, C., Lin, Z.: On the lower bound of minimizing Polyak--Łojasiewicz functions. In: The Thirty Sixth Annual Conference on Learning Theory, pp~2948–2968. PMLR (2023)
	
	\bibitem{2013Zhang}
	Zhang, H., Yin, W.: Gradient methods for convex minimization: better rates under weaker conditions. arXiv preprint arXiv:1303.4645 (2013)
	
\end{thebibliography}
\end{document}